\documentclass[11pt,a4paper,leqno]{amsart}
\usepackage[T1]{fontenc}
\usepackage[utf8]{inputenc}
\usepackage{lmodern}
\usepackage{microtype}

\usepackage{CJK}
\usepackage{mathrsfs}
\usepackage{amsmath}

\usepackage{amsfonts}
\usepackage{geometry}
\usepackage{amsthm}
\usepackage{amssymb}
\usepackage{amscd}
\usepackage{constants}
\usepackage{latexsym}
\usepackage[final]{hyperref}
\usepackage[abbrev]{amsrefs}
\allowdisplaybreaks
\numberwithin{equation}{section}
\newtheorem{theorem}{Theorem}[section]

\newtheorem{lemma}[theorem]{Lemma}
\newtheorem{proposition}[theorem]{Proposition}
\theoremstyle{definition}

\numberwithin{equation}{section}
\newcommand{\abs}[1]{\lvert#1\rvert}
\newcommand{\norm}[1]{\lVert#1\rVert}
\newcommand{\st}{\,\vert\,}
\newcommand{\dif}{\,\mathrm{d}}

\newcommand{\Rset}{\mathbb{R}}
\newcommand{\Nset}{\mathbb{N}}
\newcommand{\weakto}{\rightharpoonup}
\DeclareMathOperator{\supp}{supp}

\title[Nonlinear Choquard equations]
      {Standing waves with a critical frequency for nonlinear Choquard equations }

\subjclass[2010]{35B05, 35J60.}
\keywords{Nonlinear Choquard equations; nonlocal semilinear elliptic equation; semi-classical limit; variational methods.}
\author[J. Van Schaftingen]{Jean Van Schaftingen}
\address{Jean Van Schaftingen\\
Institut de Recherche en Math\'{e}matique et Physique\\
Universit\'{e} Catholique de Louvain\\
Chemin du Cyclotron 2 bte L7.01.01\\
1348 Louvain-la-Neuve\\
Belgium}
\email{Jean.VanSchaftingen@uclouvain.be}

\author[J. Xia]{Jiankang Xia  (夏健康)}
\address{Xia Jiankang\\
Chern Institute of Mathematics and LPMC\\
Nankai University\\
Tianjin, 300071\\
China}
\email{fyxt001@163.com}
\thanks{J.\thinspace Van Schaftingen was supported by the Projet de Recherche (Fonds de la Recherche Scientifique--FNRS) T.1110.14 ``Existence and asymptotic behavior of solutions to systems of semilinear elliptic partial differential equations''.
Jiankang Xia is partially  supported by NSF of China (NSFC-11271201) and he acknowledges the support of the China Scholarship Council and the hospitality the Universit\'e catholique de Louvain (Institut de Recherche en Math\'ematique et en Physique).}



\begin{document}
\begin{abstract}
In this paper, we study the nonlocal Choquard equation
\[
-\varepsilon^2 \Delta u_\varepsilon + V u_\varepsilon=\bigl(I_\alpha \ast \abs{u_\varepsilon}^p\bigr)\abs{u_\varepsilon}^{p-2}u_\varepsilon
\]
where $N\geq 1$, $I_\alpha$ is the Riesz potential of order \(\alpha \in (0, N)\) and $\varepsilon>0$ is a parameter. When the nonnegative potential $V\in C (\Rset^N)$ achieves \(0\) with a homogeneous behaviour or on the closure of an open set but remains bounded away from \(0\) at infinity, we show the existence of groundstate solutions for small $\varepsilon>0$ and exhibit the concentration behaviour as $\varepsilon\to 0$.
\end{abstract}
\begin{CJK}{UTF8}{gbsn}
\maketitle
\end{CJK}

\tableofcontents
\section{Introduction and main results}
\label{section1}
We are interested in the following nonlinear \emph{Choquard equation}
\begin{equation}
\tag{$\mathcal{C}_\varepsilon$}
\label{eqChoquard}
-\varepsilon^2 \Delta u_\varepsilon + V u_\varepsilon=\bigl(I_\alpha \ast \abs{u_\varepsilon}^p\bigr)\abs{u_\varepsilon}^{p-2 }u_\varepsilon  \qquad \text{ in } \mathbb{R}^N
\end{equation}
where the dimension $N\in\mathbb{N}_*=\{1,2,\ldots\}$ of the \emph{Euclidean space} \(\Rset^N\) is given and $V\in C(\mathbb{R}^N,[0,+\infty))$ is an \emph{external potential}. 
The function $I_\alpha:\mathbb{R}^N\backslash{\{0\}}\to \mathbb{R}$ is the \emph{Riesz potential} of order $\alpha\in(0,N)$, defined for each \(x \in \mathbb{R} \setminus \{0\}\) by
\[
  I_{\alpha}(x)=\frac{\Gamma(\frac{N-\alpha}{2})}{2^\alpha\Gamma(\frac{\alpha}{2})\pi^{\frac{N}{2}}\abs{x}^{N-\alpha}},
\]
where $\Gamma$ is the classical Gamma function, and $\varepsilon>0$ is a small parameter.

The nonlocal semilinear equation \eqref{eqChoquard} with $N=3$, $\alpha=2$ and $p=2$ is known as the Choquard--Pekar equation. It appears in several physical contexts: standing waves for the Hartree equation, Pekar's quantum physical model of a polaron at rest \cite{P}, Choquard's model of an electron trapped in its own hole \cite{L} and a model  coupling the Schr\"odinger equation of quantum mechanics and the classical Newtonian gravitational potential \cites{Diosi1984,J1,J2,MPT,Penrose1996}.

The existence and qualitative properties of solutions of the Choquard equation \eqref{eqChoquard} have been studied mathematically for a few decades when $\varepsilon$ is a fixed constant by variational methods \cites{L,Lions1980,Lions1984,MVJFA} (see also the review \cite{MVSReview} and the references therein). In quantum physical models, the parameter $\varepsilon$ is an adimensionalized Planck constant which in the \emph{semiclassical limit r\'egime} is quite small. In general, one expects to recover some classical dynamics in this r\'egime.

This semiclassical limit is well understood for the nonlinear Schr\"odinger equation
\begin{equation*}
 -\varepsilon^2 \Delta u_\varepsilon + V u_\varepsilon= \abs{u_\varepsilon}^{q-2 }u_\varepsilon  \qquad \text{ in } \mathbb{R}^N
\end{equation*}
Under the assumption that \(\inf_{\Rset^N} V > 0\), solutions concentrating at critical points of the potential \(V\) have been construted by topological and variational methods \citelist{\cite{FloerWeinstein1986}\cite{OH}\cite{Rabinowitz1992}\cite{ABC}\cite{AM2006}\cite{AM2007}\cite{DF1997}\cite{wang}}.
The remaining case \(\inf_{\Rset^N} V = 0\) corresponds to the \emph{critical frequency}. When \(\inf_{\Rset^N} V = 0\) and \(V > 0\) on \(\Rset^N\), such constructions are still possible provided the function \(V\) does not decay too fast at infinity or \(q\) is large enough \citelist{\cite{BonheureVanSchaftingen2008}\cite{MorozVanSchaftingen2010}\cite{AmbrosettiMalchiodiRuiz2006}}. When the potential \(V\) vanishes somewhere in \(\Rset^N\), then the solutions exhibit a different concentration behaviour which was studied by J. Byeon and Z.-Q. Wang \citelist{\cite{ByeonWang2002}\cite{ByeonWang2003}}.

For the Choquard equation \eqref{eqChoquard}, the semiclassical limit has been studied in the subcritical frequency case \(\inf_{\mathbb{R}^N} V > 0\) \citelist{\cite{WeiWinter2009}\cite{CingolaniSecchiSquassina2010}} (with extensions to the quasilinear case \citelist{\cite{AYJDE}\cite{AYJMP}} and to general nonlinearities \cite{YZZ}) and when \(\inf V = 0\) and \(V > 0\) \citelist{\cite{MorozVanSchaftingen2015}\cite{Secchi2010}}.

\medbreak

In this work we study a large class of potential \(V\) that vanishes somewhere on \(\Rset^N\).
\begin{theorem}\label{theoremGammaHomogeneous}
Let $N\in \Nset_*$, $p \in (0, +\infty)$ and \(V \in C (\Rset^N)\).
If
\[
 \frac{N-2}{N+\alpha} < \frac{1}{p} < \frac{N}{N+\alpha},
\]
if \(V \ge 0\) on \(\Rset^N\), if
\[
 \varliminf_{\abs{x} \to \infty} V (x) > 0,
\]
and if there exists \(\gamma > 0\) such that,
for every \(x \in \Rset^N\) either
\[
 \lim_{z \to x} \frac{V (z)}{\abs{z - x}^\gamma} = + \infty
\]
or there exist a positive \(\gamma\)--homogeneous function \(W \in C (\Rset^N)\) such that
\[
 \lim_{z \to x} \frac{V (z) - W (z - x)}{\abs{z - x}^{\gamma}} = 0,
\]
and if there exists at least one point \(x \in \Rset^N\) such that the second alternative holds,
then, for sufficiently small $\varepsilon>0$, equation~\eqref{eqChoquard} has a positive groundstate solution $u_\varepsilon$. \\
Moreover, there exist \(x_* \in \Rset^N\), a positive \(\gamma\)--homogeneous function \(W_* \in C (\Rset^N)\) such that
\[
 \lim_{z \to x_*} \frac{V (z) - W_* (z - x_*)}{\abs{z - x_*}^{\gamma}} = 0,
\]
a groundstate \(v_* \in H^1_{W_*}(\Rset^N) \setminus \{0\}\) of the problem
\begin{align*}
 -\Delta v_* + W_* v_* &= \bigl(I_\alpha \ast \abs{v_*}^p\bigr) \abs{v_*}^{p - 2} v_* && \text{in \(\Rset^N\)},
\end{align*}
and a sequence \((\varepsilon_n)_{n \in \Nset}\) in \((0, +\infty)\) converging to \(0\) such that, as \(n \to \infty\),
\begin{align*}
  \varepsilon_n^{\frac{\alpha-\gamma}{(p-1)(\gamma+2)}}u_{\varepsilon_n}(x_*+\varepsilon_n^{\frac{2}{\gamma+2}}\cdot)&\to  v_* 
  &&\text{ in } H^1_{\mathrm{loc}}(\mathbb{R}^N),
\end{align*}
and
\[
\begin{split}
 \int_{\Rset^N} \abs{\nabla v_*}^2 + W_* \abs{v_*}^2
  =& \lim_{n \to \infty} \frac{1}{\varepsilon_n^{\frac{2}{\gamma + 2} (N + \frac{p \gamma - \alpha}{p - 1})}} \int_{\Rset^N} \varepsilon_n^2 \abs{\nabla u_{\varepsilon_n}}^2 + V \abs{u_{\varepsilon_n}}^2\\
  =&  \inf \,\Bigl\{\int_{\Rset^N} \abs{\nabla v}^2 + W \abs{v}^2\st v \in H^1_{\mathrm{loc}} (\Rset^N),\\
 &\qquad \qquad \int_{\Rset^N} \bigl(I_\alpha \ast \abs{v}^p\bigr) \abs{v}^p = \int_{\Rset^N} \abs{\nabla v}^2 + W \abs{v}^2 ,\\
 &\qquad\qquad W \in C (\Rset^N) \text{ is positive and \(\gamma\)--homogeneous }\\
 &\qquad \qquad \text{and there exists } x \in \Rset^N \text{ such that }\\
 &\hspace{4cm} \lim_{z \to x } \frac{V  (z) - W (z - x )}{\abs{z - x}^\gamma} = 0
 \Bigr\}<+\infty.
\end{split}
\]
\end{theorem}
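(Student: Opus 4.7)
The plan is to establish a matching two-sided asymptotic $c_\varepsilon = \varepsilon^\theta (c_* + o(1))$ as $\varepsilon \to 0$, where $c_\varepsilon$ is the mountain-pass/Nehari ground-state level of \eqref{eqChoquard}, $c_*$ is (up to a positive factor $(p-1)/(2p)$) the infimum appearing in the theorem, and $\theta = \frac{2}{\gamma+2}\bigl(N + \frac{p\gamma-\alpha}{p-1}\bigr)$; from any sequence of ground states I would then extract a concentrating rescaled profile. Setting $\sigma=\frac{2}{\gamma+2}$ and $\beta=\frac{\alpha-\gamma}{(p-1)(\gamma+2)}$, a direct substitution shows that the anisotropic rescaling $u(z)=\varepsilon^{-\beta}v\bigl(\varepsilon^{-\sigma}(z-x)\bigr)$ converts the three terms in the energy of $u$ into $\varepsilon^\theta$ times the corresponding terms in the energy of $v$, with $V$ replaced by the rescaled potential $V_{\varepsilon,x}(y):=\varepsilon^{-\sigma\gamma}V(x+\varepsilon^\sigma y)$. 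The second alternative is precisely the statement $V_{\varepsilon,x}\to W$ locally uniformly on $\Rset^N$ as $\varepsilon\to 0$, producing the limit equation stated in the theorem.

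For the upper bound, I would, for each admissible triple $(x,W,v)$ with $v$ a ground state of $-\Delta v+Wv=(I_\alpha\ast\abs{v}^p)\abs{v}^{p-2}v$, use the test function $u_\varepsilon(z):=\varepsilon^{-\beta}\chi_\varepsilon(z)v\bigl(\varepsilon^{-\sigma}(z-x)\bigr)$ with a slowly diverging smooth cut-off $\chi_\varepsilon$. The uniform convergence $V_{\varepsilon,x}\to W$ on compact sets, together with the exponential-type decay of $v$ forced by the coercivity $W(y)\gtrsim\abs{y}^\gamma$ away from the zero set of $W$, makes both the cut-off error and the potential remainder $o(\varepsilon^\theta)$; after Nehari projection one obtains $c_\varepsilon\le\varepsilon^\theta(c_*+o(1))$. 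In particular $c_*<+\infty$ as soon as at least one point of the second alternative exists.

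For the lower bound and concentration, let $u_\varepsilon$ be a ground state of \eqref{eqChoquard} (whose existence at fixed $\varepsilon>0$ follows from a standard mountain-pass argument using $\varliminf_{\abs{x}\to\infty}V>0$). A nonlocal Lions-type lemma applied to $\abs{u_\varepsilon}^p$ produces points $x_\varepsilon\in\Rset^N$ carrying a positive fraction of the nonlocal energy; vanishing is ruled out because Hardy--Littlewood--Sobolev together with the Nehari identity would contradict the lower bound $c_\varepsilon\gtrsim\varepsilon^\theta$ inherited from the upper-bound step. Setting $v_\varepsilon(y):=\varepsilon^\beta u_\varepsilon(x_\varepsilon+\varepsilon^\sigma y)$, the coercivity of $V$ at infinity forces $(x_\varepsilon)$ to be bounded: otherwise $V_{\varepsilon,x_\varepsilon}\to+\infty$ locally and the rescaled energy would blow up. Along a subsequence $x_\varepsilon\to x_*\in\Rset^N$, and the first alternative at $x_*$ is similarly excluded because it would drive $V_{\varepsilon,x_*}\to+\infty$ locally and kill the concentrated mass. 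Hence $x_*$ falls in the second alternative with some $\gamma$-homogeneous $W_*$, and $v_\varepsilon\weakto v_*\ne 0$ in $H^1_{W_*}(\Rset^N)$ with $v_*$ a weak solution of the limit equation; the nonlocal Brezis--Lieb decomposition in the spirit of \cite{MVJFA} upgrades this to strong $H^1_{\mathrm{loc}}$ convergence and preserves the Nehari identity in the limit.

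Finally, Fatou applied to $\abs{\nabla v_\varepsilon}^2+V_{\varepsilon,x_\varepsilon}\abs{v_\varepsilon}^2$, combined with the preserved Nehari identity, yields $\liminf_{\varepsilon\to 0}\varepsilon^{-\theta}c_\varepsilon\ge\frac{p-1}{2p}\int(\abs{\nabla v_*}^2+W_*\abs{v_*}^2)\ge c_*$, and the upper bound of the second paragraph forces equality throughout, identifying $v_*$ as a ground state of the limit problem at $(x_*,W_*)$ and giving the chain of energy identities in the theorem. The hardest step will be the concentration-compactness analysis: one must rule out simultaneously vanishing, splitting into several bumps with the same asymptotic energy, and escape to infinity, for a rescaled sequence whose potentials $V_{\varepsilon,x_\varepsilon}$ are merely continuous and may vanish on an entire set. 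This requires the nonlocal Brezis--Lieb decomposition together with the strict subadditivity built into the variational characterization of $c_*$ across all admissible homogeneous limits, ensuring that splitting and vanishing lose a strictly positive share of energy.
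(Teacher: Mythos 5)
Your overall architecture --- rescaling by $\varepsilon^{\kappa}$ in space with $\kappa=\tfrac{2}{\gamma+2}$ and by $\varepsilon^{\frac{\alpha-\gamma}{(p-1)(\gamma+2)}}$ in amplitude, matching upper and lower bounds at the level $\varepsilon^{\kappa(N+\frac{p\gamma-\alpha}{p-1})}$, and using the collapse of the inequality chain to identify the limit --- is the same as the paper's. There is, however, a concrete gap in the concentration step. You invoke ``a nonlocal Lions-type lemma'' to produce the points $x_\varepsilon$, but at the scale $\varepsilon^{\kappa}$ the natural quadratic form $\int_{\Rset^N}\varepsilon^2\abs{\nabla u}^2+V\abs{u}^2$ does not control $\int_{B_{\varepsilon^{\kappa}}(x)}\abs{u}^2$ uniformly in the center $x$, precisely because $V$ vanishes; running the Lions argument with $\int\varepsilon^2\abs{\nabla u}^2+\abs{u}^2$ instead produces the wrong powers of $\varepsilon$ and the bootstrap through the Hardy--Littlewood--Sobolev inequality does not close. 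The paper fills this with two preparatory facts that your proposal never supplies: (i) the hypotheses (both alternatives imply $\varliminf_{z\to x}V(z)/\abs{z-x}^\gamma>0$) force $V^{-1}(\{0\})$ to be \emph{finite} and give the global lower bound $V(x)\ge\min\{\mu,\nu\abs{x-a_1}^\gamma,\dotsc,\nu\abs{x-a_k}^\gamma\}$ (Lemma~\ref{lemmaLowerBoundVEverywhere}), hence a measure bound on the sublevel sets of $V$; and (ii) from this, the uniform degenerate Poincar\'e inequality $\int_{B_{\varepsilon^{\kappa}}(x)}\varepsilon^{2\kappa}\abs{\nabla u}^2+\abs{u}^2\le C\varepsilon^{-\kappa\gamma}\int_{B_{\varepsilon^{\kappa}}(x)}\varepsilon^2\abs{\nabla u}^2+V\abs{u}^2$ valid for every $x$ (Lemma~\ref{lemglobal}). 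The same quantitative lower bound on $V$ is what yields $\operatorname{dist}(x_\varepsilon,V^{-1}(\{0\}))=O(\varepsilon^{\kappa})$; mere boundedness and convergence $x_\varepsilon\to x_*$, which is all your coercivity argument gives, is not enough for $V_{\varepsilon,x_\varepsilon}$ to converge to a homogeneous limit.

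Two further points. First, you misdiagnose the hardest step: no Brezis--Lieb decomposition and no strict subadditivity are needed. Once one bump with positive rescaled mass is located, Fatou for the potential term (which is intrinsically \emph{one-sided}, so the lower bound must be run with every positive $\gamma$--homogeneous $W$ asymptotically \emph{below} $V$ at $x_*$), weak lower semicontinuity for the gradient, and the Nehari projection $t_*\le1$ already give $\mathcal{E}(W)\le\varliminf_n\varepsilon_n^{-\kappa(N+\frac{p\gamma-\alpha}{p-1})}\mathcal{I}_{\varepsilon_n}(u_n)$; taking the supremum over such $W$ simultaneously rules out the first alternative at $x_*$ (the supremum is then $+\infty$) and yields the lower bound, and strong $H^1_{\mathrm{loc}}$ convergence follows from the matching of energies rather than from a splitting analysis. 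Second, existence at fixed small $\varepsilon$ is not a routine mountain-pass consequence of $\varliminf_{\abs{x}\to\infty}V>0$: one needs the strict inequality $\inf_{\Rset^N}V<\varliminf_{\abs{x}\to\infty}V$ and a comparison of $c_\varepsilon$ with the autonomous level at infinity, via a test function concentrated where $V$ is small, to exclude vanishing of the Palais--Smale sequence (Proposition~\ref{propositionExistence}).
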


Here, a function \(W : \Rset^N \to \Rset\) is \emph{positive \(\gamma\)--homogeneous} if
for every \(y \in \Rset^N \setminus \{0\}\), \(W (y) > 0\) and if for every \(t \in [0, +\infty)\) and every \(y \in \Rset^N\), \(W (t y) = t^\gamma W (y)\).

For the nonlinear Schr\"odinger equation, the semiclassical limit has been studied under similar asymptotic homogeneity conditions on the external potential \(V\) \cite{ByeonWang2002}.

The results can be restated in terms of convergence to minimizers of a \emph{concentration function}.
Indeed, if the limiting functional \(\mathcal{J}_W \in C^1 (H^1_W (\Rset^N))\) is defined for every \(v \in H^1_W (\Rset^N)\) by
\begin{equation}
\label{defJWv}
  \mathcal{J}_W (v) = \frac{1}{2} \int_{\Rset^N} \abs{\nabla v}^2 + W \abs{v}^2
  - \frac{1}{2 p} \int_{\Rset^N} \bigl(I_\alpha \ast \abs{v}^p\bigr) \abs{v}^p
\end{equation}
on the Hilbert space \(H^1_W (\Rset^N)\) obtained by completion of the set of smooth functions \(C^\infty_c (\Rset^N)\) endowed with the norm associated to the quadratic part of \(\mathcal{J}_W\):
\[
 \norm{u}_{H^1_W} = \Bigl(\int_{\Rset^N} \abs{\nabla u}^2 + W \abs{u}^2 \Bigr)^\frac{1}{2};
\]
the limiting groundstate level \(\mathcal{E} (W)\) is defined by
\begin{equation}
\label{defEW}
 \mathcal{E} (W) = \inf \,\bigl\{\mathcal{J}_W (v) \st v \in H^1_W (\Rset^N) \setminus \{0\} \text{ and }\langle \mathcal{J}_W' (v), v\rangle = 0 \bigr\}
\end{equation}
(this infimum is in fact always achieved since the positive \(\gamma\)--homogeneous potential \(W\) is coercive \cite{VanSchaftingenXia}),
then the function \(v_*\) achieves the infimum in \eqref{defEW} and
\[
 \mathcal{C} (x_*) = \inf\,\{ \mathcal{C} (x) \st x \in \Rset^N\} < + \infty,
\]
where the concentration function \(\mathcal{C} : \Rset^N \to (0, +\infty]\) is defined for each \(x \in \Rset^N\) by
\begin{equation*}
  \mathcal{C} (x)
  =
  \left\{
  \begin{aligned}
     &\mathcal{E} (W) &&\text{if \(W \in C (\Rset^N)\) is positive and \(\gamma\)--homogeneous }\\
     & && \qquad\qquad \qquad  \text{and }\lim_{z \to x} \frac{V (z)  - W (z - x)}{\abs{z - x}^\gamma} = 0,\\
     &+ \infty &&\text{if } \lim_{z \to x} \frac{V (z)}{\abs{z - x}^\gamma} = +\infty.
  \end{aligned}
  \right.
\end{equation*}

The main difficulty in order to prove Theorem~\ref{theoremGammaHomogeneous} is in the proof of the lower bound, where we have two radically different behaviour at points and these limit cannot be uniform. Our approach to this problem is to consider at every point all the homogeneous potentials that are asymptotically below the potential \(V\); at most points this class is unbounded, giving an infinity lower bound, and at the other points it is bounded and gives the lower bound.

\bigbreak

The case where the potential \(V\) vanishes on a large set has also been studied 
for the nonlinear Schr\"odinger equation \cite{ByeonWang2002}, we consider such a case for the Choquard equation.

\begin{theorem}\label{theoremDeadCore}
Let $N\in \Nset_*$, $p \in (0, +\infty)$ and  \(V \in C (\Rset^N)\).
If
\[
 \frac{N-2}{N+\alpha} < \frac{1}{p} < \frac{N}{N+\alpha},
\]
if \(V \ge 0\) on \(\Rset^N\), if
\[
 \varliminf_{\abs{x} \to \infty} V (x) > 0,
\]
and if there exists a bounded open set \(\Omega\) with a smooth boundary such that
\[
 \bigl\{ x \in \Rset^N \st V (x) = 0\bigr\} = \Bar{\Omega}\neq\emptyset,
\]
then, for sufficiently small $\varepsilon>0$, the Choquard equation~\eqref{eqChoquard} has a positive groundstate solution $u_\varepsilon \in H^1 (\Rset^N)$. \\
Moreover, there exist \(v_* \in H^1 (\Rset^N)\) which is a ground state of
\[
\left\{
\begin{aligned}
 -\Delta v_* &= \bigl(I_\alpha \ast \abs{v_*}^p \bigr)\abs{v_*}^{p - 2} v_*
 & &\text{in \(\Omega\)},\\
 v_* &= 0 & & \text{on \(\Rset^N \setminus \Omega\)},
\end{aligned}
\right.
\]
and a sequence \((\varepsilon_n)_{n \in \Nset}\) in \((0, +\infty)\) converging to \(0\) such that
\[
  \varepsilon_n^{-\frac{1}{p-1}} u_{\varepsilon_n} \to  v_* \text{ in } H^1 (\mathbb{R}^N).
\]
\end{theorem}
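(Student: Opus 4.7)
For each fixed $\varepsilon>0$, work on the completion $E_\varepsilon$ of $C^\infty_c(\Rset^N)$ with respect to the norm $\bigl(\int_{\Rset^N}\varepsilon^2\abs{\nabla u}^2+V\abs{u}^2\bigr)^{1/2}$. The hypothesis $\varliminf_{\abs{x}\to\infty}V>0$ forces $E_\varepsilon\hookrightarrow H^1(\Rset^N)$. The Hardy--Littlewood--Sobolev inequality together with the subcritical range of $p$ gives mountain-pass geometry for the energy
\[
  \mathcal{I}_\varepsilon(u)=\tfrac{1}{2}\int_{\Rset^N}\varepsilon^2\abs{\nabla u}^2+V\abs{u}^2-\tfrac{1}{2p}\int_{\Rset^N}\bigl(I_\alpha\ast\abs{u}^p\bigr)\abs{u}^p,
\]
and a Nehari/concentration-compactness scheme along the lines of the semiclassical Choquard literature cited in the introduction yields a positive groundstate $u_\varepsilon$ with level $\mathcal{E}_\varepsilon$. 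For the upper bound, pick a positive groundstate $w_*$ of the limit Dirichlet Choquard problem on $\Omega$, extended by zero to $\Rset^N$, with level $\mathcal{E}_\Omega$. Since $V\equiv 0$ on $\supp w_*\subset\bar\Omega$, the scaled test function $\varepsilon^{1/(p-1)}w_*$ lies on the Nehari manifold of $\mathcal{I}_\varepsilon$ and a direct computation gives
\[
  \mathcal{E}_\varepsilon\le \mathcal{I}_\varepsilon\bigl(\varepsilon^{1/(p-1)}w_*\bigr)=\varepsilon^{\frac{2p}{p-1}}\mathcal{E}_\Omega.
\]

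\textbf{Rescaling and identification of the limit.} Set $v_\varepsilon:=\varepsilon^{-1/(p-1)}u_\varepsilon$; substitution in \eqref{eqChoquard} shows
\[
  -\Delta v_\varepsilon+\varepsilon^{-2}V v_\varepsilon=\bigl(I_\alpha\ast\abs{v_\varepsilon}^p\bigr)\abs{v_\varepsilon}^{p-2}v_\varepsilon \qquad \text{in }\Rset^N,
\]
and the rescaled action equals $\varepsilon^{-\frac{2p}{p-1}}\mathcal{I}_\varepsilon(u_\varepsilon)\le\mathcal{E}_\Omega$. Combined with the Nehari identity for $v_\varepsilon$ and the lower bound on $V$ at infinity, this produces a uniform bound on $\int\abs{\nabla v_\varepsilon}^2+\varepsilon^{-2}V\abs{v_\varepsilon}^2$ and hence in $H^1(\Rset^N)$. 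Along a subsequence $\varepsilon_n\downarrow 0$, extract $v_{\varepsilon_n}\weakto v_*$ weakly in $H^1(\Rset^N)$ and strongly in $L^s_{\mathrm{loc}}$ for admissible $s$. Fatou applied to the penalty term forces $\int V\abs{v_*}^2=0$, so $v_*=0$ almost everywhere on $\Rset^N\setminus\bar\Omega$; smoothness of $\partial\Omega$ upgrades this to $v_*\in H^1_0(\Omega)$. Passing to the limit in the weak formulation against $\varphi\in C^\infty_c(\Omega)$, with the nonlocal term handled by HLS estimates and dominated convergence on compact sets, shows that $v_*$ solves the limit Dirichlet Choquard equation. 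Combining the upper bound with Fatou on the kinetic and nonlocal parts and the Nehari characterization of $\mathcal{E}_\Omega$ yields $v_*\not\equiv 0$ and equality $\mathcal{J}_\Omega(v_*)=\mathcal{E}_\Omega$, so $v_*$ is a Dirichlet groundstate and the upper bound is sharp in the limit.

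\textbf{Strong convergence in $H^1(\Rset^N)$.} Upgrade weak to strong convergence through a Brezis--Lieb decomposition of both the Dirichlet energy and the nonlocal Choquard term (the nonlocal version being by now standard). The equality of limiting energies forces the remainder $v_{\varepsilon_n}-v_*$ to vanish in the kinetic and nonlocal pieces; the penalty $\varepsilon_n^{-2}V$ simultaneously drives the $L^2$-mass outside $\bar\Omega$ to zero.

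\textbf{Expected main obstacle.} The most delicate point is to guarantee that no mass of $v_{\varepsilon_n}$ escapes through the boundary $\partial\Omega$ in the limit: unlike the local Schr\"odinger case, the nonlocal term $I_\alpha\ast\abs{v_\varepsilon}^p$ couples values of $v_\varepsilon$ on both sides of $\partial\Omega$, so one cannot simply localize the equation to $\Omega$. The argument must exploit, first, that $\varepsilon^{-2}V$ is uniformly large on any compact set disjoint from $\bar\Omega$ (which gives exponential decay of $v_\varepsilon$ away from $\bar\Omega$), and, second, the smoothness of $\partial\Omega$ through a boundary-layer type trace estimate, to conclude that $v_*\in H^1_0(\Omega)$ is attained without loss of mass across $\partial\Omega$.
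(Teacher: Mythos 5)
Your proposal follows essentially the same route as the paper: the rescaling $v_\varepsilon=\varepsilon^{-1/(p-1)}u_\varepsilon$, the penalized functional with potential $V/\varepsilon^2$, the upper bound $\mathcal{I}_\varepsilon(u_\varepsilon)\le\varepsilon^{2p/(p-1)}\mathcal{E}_\Omega$ obtained from Dirichlet groundstates supported in $\bar\Omega$, the tail estimate $\int_{\Rset^N\setminus U}|v_{\varepsilon_n}|^2\le C\varepsilon_n^2$ forcing $v_*=0$ off $\bar\Omega$, and strong $H^1(\Rset^N)$ convergence by matching energies. The one step that is misattributed is nontriviality: Fatou alone cannot rule out $v_*=0$; the paper instead combines an $\varepsilon$-uniform lower bound $\int_{\Rset^N}|v_{\varepsilon_n}|^{2Np/(N+\alpha)}\ge c>0$ coming from the Nehari constraint together with Hardy--Littlewood--Sobolev and Sobolev (with constants independent of $\varepsilon$) with \emph{global} strong convergence in $L^{2Np/(N+\alpha)}(\Rset^N)$ (tail estimate plus Rellich plus Gagliardo--Nirenberg interpolation) --- both ingredients you already have on hand --- and, incidentally, no exponential decay or boundary-layer trace estimates are needed, since testing only against functions supported in $\Omega$ and using the global $L^{2Np/(N+\alpha)}$ convergence suffices to handle the nonlocal coupling across $\partial\Omega$.
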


Theorem~\ref{theoremDeadCore} is reminiscent of some results obtained for the problem
\[
 -\Delta u_\mu + (1 + \mu V)u_{\mu} = \bigl(I_\alpha \ast \abs{u_\mu}^p) \abs{u_\mu}^{p - 2} u_\mu
\]
when \(\mu \to +\infty\) \citelist{\cite{AlvesNobregaYang2016}\cite{Lu2015}}.
\bigbreak

The rest of the paper is organized as follows. We study the existence of solutions for small parameters in Section~\ref{sectionPreliminary} and Section~\ref{sectionGroundstates} is devoted to study the existence of groundstate solution for small parameter thus complete the proof of the first part of Theorems~\ref{theoremGammaHomogeneous} and \ref{theoremDeadCore}. The asymptotics of Theorem~\ref{theoremGammaHomogeneous} are obtained in Section~\ref{sectionGammaHomogeneous}, whereas those of Theorem~\ref{theoremDeadCore} are the object of Section~\ref{sectionDeadCore}.

\section{Existence of solutions}
\label{sectionPreliminary}
Equation \eqref{eqChoquard} is variational in nature, its weak solutions are, at least formally, critical points of the functional
defined by
\begin{equation}\label{Enerfuntl}
\mathcal{I}_\varepsilon(u):=\frac{1}{2}\int_{\mathbb{R}^N}\varepsilon^2|\nabla u|^2+V \abs{u}^2-\frac{1}{2p}\int_{\mathbb{R}^N} (I_\alpha \ast \abs{u}^p)\abs{u}^p.
\end{equation}
The linear part of the equation \eqref{eqChoquard} naturally induces a norm
\[
  \norm{u}^{2}_{\varepsilon}:=\int_{\mathbb{R}^N}\varepsilon^2|\nabla u|^2+V \abs{u}^2.
\]
The norms for various \(\varepsilon\) are all equivalent to each other.
We set $H^1_V (\Rset^N)$ to be the Hilbert space obtained by
completion of the set of smooth test functions $C^{\infty}_c(\mathbb{R}^N)$ with respect to any of the norm $\norm{\cdot}_\varepsilon$.
Although it will not play any role in this work, using the continuity of \(V\) and the fact that \(\varliminf_{\abs{x} \to \infty} V (x) > 0\) the space \(H^1_V (\Rset^N)\) can also be characterized as
\[
  H^1_V (\Rset^N) =\Big\{u\in H^{1}_{\mathrm{loc}}(\mathbb{R}^N) \st \int_{\mathbb{R}^N} V\abs{u}^2<+\infty\Big\}.
\]

We first recall how the space $H^1_V (\Rset^N)$ can be embedded continuously into the classical Sobolev space $H^1(\mathbb{R}^N)$ equipped with the standard norm $\norm{\cdot}_{H^1}$ for fixed $\varepsilon>0$, even though the potential $V$ has a nontrivial set of zeroes.

\begin{lemma}\label{lem2.1}
Let \(V : \Rset^N \to \Rset\). If \(\varliminf_{\abs{x} \to \infty} V (x)  > 0\), then for every \(\varepsilon > 0\), there exists a constant \(C > 0\) such that for every \(u \in H^1_V (\Rset^N)\), \(u \in H^1 (\Rset^N)\) and
\[
 \int_{\Rset^N} \varepsilon^2 \abs{\nabla u}^2 + \abs{u}^2
 \le C \int_{\Rset^N} \varepsilon^2 \abs{\nabla u}^2 + V \abs{u}^2.
\]
\end{lemma}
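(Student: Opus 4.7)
The plan is to prove the inequality first for test functions \(u \in C^\infty_c (\Rset^N)\) and then extend by density to all of \(H^1_V (\Rset^N)\). The basic idea is to split \(\Rset^N\) into an exterior region where \(V\) is bounded below by a positive constant, and a bounded interior ball where we invoke a local Poincar\'e-type inequality. Using continuity of \(V\) together with the hypothesis \(\varliminf_{\abs{x} \to \infty} V (x) > 0\), I would first select \(R > 0\) and \(\delta > 0\) such that \(V (x) \ge \delta\) whenever \(\abs{x} \ge R\); the exterior bound
\[
 \int_{\Rset^N \setminus B_R} \abs{u}^2 \le \frac{1}{\delta} \int_{\Rset^N} V \abs{u}^2
\]
is then immediate.

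For the bounded interior region, I would establish the following local Poincar\'e-type inequality on the slightly larger ball \(B_{2R}\): there is a constant \(C_R > 0\) such that, for every \(u \in H^1 (B_{2R})\),
\[
 \int_{B_R} \abs{u}^2 \le C_R \Bigl( \int_{B_{2R}} \abs{\nabla u}^2 + \int_{B_{2R} \setminus B_R} \abs{u}^2\Bigr).
\]
The proof is a standard compactness argument: if the inequality failed, a rescaled sequence \((u_n)\) would satisfy \(\norm{u_n}_{L^2 (B_R)} = 1\), \(\norm{\nabla u_n}_{L^2 (B_{2R})} \to 0\), and \(\norm{u_n}_{L^2 (B_{2R} \setminus B_R)} \to 0\); it would be bounded in \(H^1 (B_{2R})\), and by Rellich--Kondrachov a subsequence would converge in \(L^2 (B_{2R})\) to a limit which would be constant (vanishing gradient) and vanish on \(B_{2R} \setminus B_R\), hence identically zero, contradicting the unit normalization on \(B_R\).

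Combining the two steps and estimating the annular term by the exterior bound yields
\[
 \int_{B_R} \abs{u}^2 \le \frac{C_R}{\varepsilon^2} \int_{\Rset^N} \varepsilon^2 \abs{\nabla u}^2 + \frac{C_R}{\delta} \int_{\Rset^N} V \abs{u}^2
\]
for every \(u \in C^\infty_c (\Rset^N)\); adding this to the exterior estimate and to the trivial gradient term produces the desired inequality with a constant \(C\) depending on \(\varepsilon\), \(R\), \(\delta\) and \(N\). Density of \(C^\infty_c (\Rset^N)\) in \(H^1_V (\Rset^N)\), which holds tautologically by definition of the latter as a completion, extends the bound to arbitrary \(u \in H^1_V (\Rset^N)\). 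The only substantive ingredient is the local Poincar\'e inequality, which rests on the compactness of the Rellich embedding \(H^1 (B_{2R}) \hookrightarrow L^2 (B_{2R})\); the rest is bookkeeping, and the dependence of \(C\) on \(\varepsilon\) is harmless since \(\varepsilon\) is fixed.
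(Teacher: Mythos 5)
Your proposal is correct and follows essentially the same strategy as the paper: split $\Rset^N$ into an exterior region where $V$ is bounded below (giving the $L^2$ bound there directly) and a bounded ball handled by a local Poincar\'e-type inequality whose boundary term is absorbed into the exterior estimate. The only difference is cosmetic: the paper obtains the interior estimate by applying the Dirichlet Poincar\'e inequality to $\psi u$ for an explicit cutoff $\psi$, whereas you establish the equivalent local inequality by a compactness--contradiction argument via Rellich; both are standard and yield the same conclusion.
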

\begin{proof}
Let
\[
 \mu = \frac{1}{2} \varliminf_{\abs{x} \to \infty} V (x)  > 0.
\]
By definition of the limit, there exists \(R > 0\) such that if \( x \in \Rset^N \setminus B_{R/2}\),
\[
 V(x) \ge \mu.
\]
(Here and in the sequel, we use the notation  $B_r(a)$ to denote the ball in $\mathbb{R}^N$ with radius $r$ and centered at $a$ and $B_r = B_r(0)$.)
By integration, we have immediately
\begin{equation}
\label{ineqMuOutside}
\int_{\mathbb{R}^N \setminus B_{R/2}}\abs{u}^2  \leq \frac{1}{\mu} \int_{\mathbb{R}^N \setminus B_{R/2} }V \abs{u}^2.
\end{equation}
We take  a function $\psi\in C^{\infty}(\mathbb{R}^N)$ such that $0\leq \psi\leq1$ in \(\mathbb{R}^N\), $\psi(x)=1$ for each $x\in B_{R/2}$ and $\psi (x)= 0$ for each $x \in \mathbb{R}^N \setminus B_R$. Then, it follows from the classical Poincar\'e inequality on the ball \(B_R\) that
\begin{equation}
\label{ineqMuInside}
\begin{split}
 \int_{B_{R/2}}  \abs{u}^2 &\le  \int_{B_R} \abs{\psi u}^2 \leq \Cl{upoinc} \int_{B_R} \abs{\nabla(\psi u)}^2\\
&\leq 2 \Cr{upoinc} \int_{\mathbb{R}^N} \abs{\nabla u}^2+ \frac{2 \Cr{upoinc} \norm{ \nabla \psi}_{L^\infty}^2}{\mu} \int_{B_R \setminus B_{R/2}} V \abs{u}^2 .
\end{split}
\end{equation}
The conclusion then follows from the combination of the inequalities \eqref{ineqMuInside} and \eqref{ineqMuOutside}.
\end{proof}

By the classical Sobolev embedding of $H^{1}(\mathbb{R}^N)$ to $L^q(\mathbb{R}^N)$,
we deduce that the space \(H^1_V (\Rset^N)\) can be continuously embedded in \(L^q (\Rset^N)\) when $\frac{1}{2}\ge \frac{1}{q}\geq\frac{1}{2}-\frac{1}{N}$.

The well-definiteness, continuity and differentiability of the nonlocal term in the function \(\mathcal{I}_\varepsilon\) defined by \eqref{Enerfuntl} follows then from the classical \emph{Hardy--Littlewood--Sobolev inequality} \cite{LL}*{Theorem 4.3} which states that if $\alpha\in(0,N)$, $s\in(1,N/\alpha)$ and if $\varphi\in L^s(\mathbb{R}^N)$, then $I_\alpha \ast \varphi\in L^{\frac{Ns}{N-\alpha s}}(\mathbb{R}^N)$ and
\begin{equation}
\label{HLS}
 \norm{I_\alpha \ast \varphi}_{L^{\frac{Ns}{N-\alpha s}}}\leq C_{H}\|\varphi\|_{L^s}
\end{equation}
where the constant $C_{H}>0$ depends only on $\alpha$, $N$, and $s$.

A solution $u$ is a \emph{groundstate} of the Choquard equation \eqref{eqChoquard}  $\mathcal{I}_\varepsilon(u)$ is the least among all nontrivial critical values of $\mathcal{I}_\varepsilon$,
namely, $u$ has the least energy among nontrivial solutions.
A natural and well known method to search the groundstate is to minimize the functional $\mathcal{I}_\varepsilon$ on the \emph{Nehari manifold} (see \cite{SW2}) of the equation \eqref{eqChoquard} which is defined by
\[
\mathcal{N}_\varepsilon:=\bigl\{u\in H^1_V(\Rset^N) \st u\neq 0 \text{ and } \langle \mathcal{I}_{\varepsilon}'(u),u\rangle=0\bigr\}.
\]
The corresponding groundstate energy is described as
\begin{equation*}
c_\varepsilon:=\inf\limits_{u\in\mathcal{N}_\varepsilon} \mathcal{I}_\varepsilon(u).
\end{equation*}
\begin{lemma}
\label{lem2.3}
Let $p>1$ and $1/p\in(\frac{N-2}{N+\alpha},\frac{N}{N+\alpha})$. For given $\varepsilon>0$, the groundstate energy $c_\varepsilon$  is positive and $\mathcal{N}_\varepsilon$ is a manifold of class $C^1$. Moreover, if $u\in\mathcal{N}_\varepsilon$ is a critical point of $ \mathcal{I}_\varepsilon\big|_{\mathcal{N}_\varepsilon}$, then $ \mathcal{I}_{\varepsilon}'(u)=0$.
\end{lemma}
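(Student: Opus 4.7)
The plan is to verify the three assertions by the standard Nehari-manifold toolkit, the only analytic input being the Hardy--Littlewood--Sobolev inequality \eqref{HLS} composed with the Sobolev embedding into \(L^q(\Rset^N)\) and the continuous embedding \(H^1_V(\Rset^N) \hookrightarrow H^1(\Rset^N)\) provided by Lemma~\ref{lem2.1}.

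I first introduce the constraint function
\[
  G_\varepsilon (u) := \langle \mathcal{I}_\varepsilon'(u), u\rangle = \norm{u}_\varepsilon^2 - \int_{\Rset^N} \bigl(I_\alpha \ast \abs{u}^p\bigr)\abs{u}^p,
\]
which is of class \(C^1\) on \(H^1_V(\Rset^N)\) since the nonlocal term is, by HLS with exponent \(s = 2N/(N+\alpha)\) together with the Sobolev embedding \(H^1(\Rset^N) \hookrightarrow L^{2Np/(N+\alpha)}(\Rset^N)\) (both endpoints of which are ensured by the range of \(1/p\)), continuous and differentiable. To get the manifold structure I compute
\[
  \langle G_\varepsilon'(u), u\rangle = 2\norm{u}_\varepsilon^2 - 2p \int_{\Rset^N}\bigl(I_\alpha \ast \abs{u}^p\bigr)\abs{u}^p,
\]
and observe that on \(\mathcal{N}_\varepsilon\) this reduces to \(2(1-p)\norm{u}_\varepsilon^2\), which is strictly negative because \(p > 1\) and \(u \neq 0\). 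Hence \(G_\varepsilon\) is a submersion along \(\mathcal{N}_\varepsilon\), and \(\mathcal{N}_\varepsilon\) is a \(C^1\) hypersurface in \(H^1_V(\Rset^N)\).

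For the positivity of \(c_\varepsilon\), I note that on \(\mathcal{N}_\varepsilon\),
\[
  \mathcal{I}_\varepsilon(u) = \tfrac{p-1}{2p}\,\norm{u}_\varepsilon^2,
\]
so it suffices to establish a uniform positive lower bound on \(\norm{u}_\varepsilon\). Combining HLS \eqref{HLS} with \(s=2N/(N+\alpha)\), the Sobolev embedding, and Lemma~\ref{lem2.1} yields a constant \(C_\varepsilon > 0\) such that
\[
  \int_{\Rset^N} \bigl(I_\alpha \ast \abs{u}^p\bigr)\abs{u}^p \le C_\varepsilon \,\norm{u}_\varepsilon^{2p}
\]
for every \(u \in H^1_V(\Rset^N)\). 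Applied to \(u \in \mathcal{N}_\varepsilon\), this becomes \(\norm{u}_\varepsilon^2 \le C_\varepsilon \norm{u}_\varepsilon^{2p}\); since \(p > 1\) and \(u \neq 0\), this forces \(\norm{u}_\varepsilon \ge C_\varepsilon^{-1/(2p-2)} > 0\), from which \(c_\varepsilon > 0\) follows.

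Finally, for the free-critical-point property, Lagrange multipliers on the \(C^1\) manifold \(\mathcal{N}_\varepsilon\) produce \(\lambda \in \Rset\) with \(\mathcal{I}_\varepsilon'(u) = \lambda G_\varepsilon'(u)\); pairing with \(u\) gives
\[
  0 = \langle \mathcal{I}_\varepsilon'(u), u\rangle = \lambda \langle G_\varepsilon'(u), u\rangle = 2\lambda(1-p)\,\norm{u}_\varepsilon^2,
\]
and since \(p>1\) and \(\norm{u}_\varepsilon > 0\) we conclude \(\lambda = 0\), whence \(\mathcal{I}_\varepsilon'(u) = 0\). There is no substantial obstacle here; the only point requiring minor care is checking that the assumption \((N-2)/(N+\alpha) < 1/p < N/(N+\alpha)\) is exactly what is needed to legitimize the HLS--Sobolev estimate on the nonlocal term, and that the embedding constant furnished by Lemma~\ref{lem2.1} depends on \(\varepsilon\) but this is harmless since \(\varepsilon\) is fixed.
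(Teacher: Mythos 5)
Your proof is correct and follows essentially the same route as the paper: the HLS--Sobolev estimate $\int (I_\alpha\ast\abs{u}^p)\abs{u}^p\le C\norm{u}_\varepsilon^{2p}$ to bound $\norm{u}_\varepsilon$ away from zero on $\mathcal{N}_\varepsilon$ (hence $c_\varepsilon>0$), the computation $\langle G_\varepsilon'(u),u\rangle=-2(p-1)\norm{u}_\varepsilon^2<0$ for the $C^1$ manifold structure, and the Lagrange multiplier argument tested against $u$ to kill $\lambda$. No discrepancies worth noting.
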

\begin{proof}%
\resetconstant
We fix \(\varepsilon > 0\).
If we define $\mathcal{G}_\varepsilon(u):=\langle  \mathcal{I}_{\varepsilon}'(u),u\rangle$,
then for any $u\in \mathcal{N}_\varepsilon$, we have $\mathcal{G}_\varepsilon(u)=0$, which, together with the Hardy--Littlewood--Sobolev inequality \eqref{HLS} and the Sobolev inequality implies that
\[
  \norm{u}_{\varepsilon}^2=\int_{\mathbb{R}^N} (I_\alpha \ast \abs{u}^p)\abs{u}^p\leq \Cl{xays} \norm{u}_{\varepsilon}^{2p}
\]
where the constant $\Cr{xays} >0$ depends on $\varepsilon$.
This leads to
\begin{equation*}
\norm{u}^2_\varepsilon\geq\Cr{xays}^{-\frac{1}{p-1}}>0.
\end{equation*}
Hence, for any $u\in\mathcal{N}_\varepsilon$, we have
\[
 \mathcal{I}_\varepsilon(u)
 =\Bigl(\frac{1}{2}-\frac{1}{2p}\Bigr)\norm{u}_{\varepsilon}^{2}
 \geq \Bigl(\frac{1}{2}-\frac{1}{2p}\Bigr)C_1^{-\frac{1}{p-1}}
\]
thus, $c_\varepsilon>0$. Furthermore, since $p>1$, we know, for each $u\in\mathcal{N}_\varepsilon$, that
\begin{equation}\label{boundaway2}
\langle \mathcal{G}_\varepsilon '(u),u\rangle =-2(p-1)\norm{u}_{\varepsilon}^2\leq -2(p-1)C_1^{-\frac{1}{p-1}}<0,
\end{equation}
it follows from the implicit function theorem that $\mathcal{N}_\varepsilon$ is an embedded submanifold of class $C^1$.

Let us no assume that the function $u\in\mathcal{N}_\varepsilon$ is a critical point of  the restricted functional $\mathcal{I}_\varepsilon\big|_{\mathcal{N}_\varepsilon}$, then there exists a Lagrange multiplier $\lambda_\varepsilon\in\mathbb{R}$, such that 
\begin{equation}
\label{eqxanlg}
\mathcal{I}_{\varepsilon}'(u)=\lambda_\varepsilon G_{\varepsilon}'(u). 
\end{equation}
By testing this equation against \(u\) itself, we have
\[
  \lambda_\varepsilon \langle G_{\varepsilon}'(u),u\rangle=\langle \mathcal{I}_{\varepsilon}'(u),u\rangle =0.
 \]
 we thus deduce by \eqref{boundaway2} that $\lambda_\varepsilon=0$ and the conclusion follows then from \eqref{eqxanlg}.
\end{proof}

We now prove the existence of groundstate solutions of \eqref{eqChoquard} for small parameters.

\begin{proposition}
\label{propositionExistence}
Let $N\geq 1$, $p>1$, $1/p\in(\frac{N-2}{N+\alpha},\frac{N}{N+\alpha})$ and let
\(V \in C (\Rset^N)\).
If
$$
 0 \le \inf_{\Rset^N} V < \varliminf\limits_{\abs{x}\to\infty}V(x),
$$
then, for sufficiently small $\varepsilon>0$, the Choquard equation~\eqref{eqChoquard} has a positive groundstate solution.
\end{proposition}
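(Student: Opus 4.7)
The plan follows the classical strategy for singularly-perturbed elliptic problems with subcritical frequency: compare the groundstate level $c_\varepsilon$ with the corresponding level $c_\varepsilon^\infty$ for the constant potential at infinity. Set $V_\infty := \varliminf_{\abs{x} \to \infty} V (x) > 0$ and let $w_\infty \in H^1 (\Rset^N)$ denote a positive, radial, exponentially decaying groundstate of the autonomous limit problem
\[
 -\Delta w + V_\infty w = \bigl(I_\alpha \ast \abs{w}^p\bigr) \abs{w}^{p-2} w \quad \text{in } \Rset^N,
\]
whose existence is classical. A rescaling $y = x/\varepsilon$ identifies the analogous level for the $\varepsilon$-problem with constant potential $V_\infty$ as $c_\varepsilon^\infty = \varepsilon^{N - \alpha/(p-1)} c^\infty$ for an explicit positive constant $c^\infty$.

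The first key step is the strict energy inequality $c_\varepsilon < c_\varepsilon^\infty$ for sufficiently small $\varepsilon > 0$. Using the continuity of $V$ together with the hypothesis $\inf V < V_\infty$, I choose $x_0 \in \Rset^N$ with $V (x_0) < V_\infty$, take as trial function $u_\varepsilon (x) := w_\infty ((x - x_0)/\varepsilon)$, and project it onto $\mathcal{N}_\varepsilon$ via the unique positive multiplier $t_\varepsilon$ supplied by Lemma~\ref{lem2.3}. A change of variables combined with dominated convergence (using the exponential decay of $w_\infty$ so that only the value $V (x_0)$ survives in the limit $\varepsilon \to 0$) yields the expansion
\[
 c_\varepsilon \le \mathcal{I}_\varepsilon (t_\varepsilon u_\varepsilon) = \varepsilon^{N - \alpha/(p-1)} \bigl(c (V (x_0)) + o (1)\bigr),
\]
where $c (\lambda)$ denotes the Nehari level of the constant-potential Choquard problem with $V \equiv \lambda$ at $\varepsilon = 1$. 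Since $\lambda \mapsto c (\lambda)$ is strictly increasing (by an elementary scaling) and $V (x_0) < V_\infty$, the strict inequality follows.

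The second key step is Palais--Smale compactness below $c_\varepsilon^\infty$. By Lemma~\ref{lem2.3} together with Ekeland's variational principle, there exists a bounded Palais--Smale sequence $(u_n) \subset \mathcal{N}_\varepsilon$ for the unconstrained functional $\mathcal{I}_\varepsilon$ at level $c_\varepsilon$. Extracting a subsequence, $u_n \weakto u_\varepsilon$ in $H^1_V (\Rset^N)$. A nonlocal Brezis--Lieb decomposition of the convolution term (Hardy--Littlewood--Sobolev controls the cross products) combined with a Lions-type concentration-compactness dichotomy shows that if $u_\varepsilon = 0$ then either the sequence vanishes in the sense of Lions (contradicting $c_\varepsilon > 0$ from Lemma~\ref{lem2.3}), or there exist translations $\abs{y_n} \to \infty$ such that $u_n (\cdot + y_n)$ concentrates along a nontrivial solution of the autonomous limit equation with energy at least $c_\varepsilon^\infty$, contradicting step one. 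Hence $u_\varepsilon \neq 0$, and weak lower semicontinuity together with $u_\varepsilon \in \mathcal{N}_\varepsilon$ (as a nontrivial weak solution) force $\mathcal{I}_\varepsilon (u_\varepsilon) = c_\varepsilon$.

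Positivity follows by replacing $u_\varepsilon$ with $\abs{u_\varepsilon}$ (which preserves $\mathcal{I}_\varepsilon$ because only $\abs{u}^2$ and $\abs{u}^p$ enter), observing that $\abs{u_\varepsilon}$ is again a groundstate solving \eqref{eqChoquard} with nonnegative right-hand side, and invoking the strong maximum principle. The step I expect to be the main obstacle is the Palais--Smale analysis: the nonlocal convolution requires an adapted Brezis--Lieb splitting and a careful control of the cross-interactions between the weak limit and any escaping bubble, with the escaping-mass energy tied precisely to the autonomous level $c_\varepsilon^\infty$ rather than to some smaller quantity.
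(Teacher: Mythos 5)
Your proposal is correct in its overall architecture and arrives at the same two pillars as the paper: a strict comparison between \(c_\varepsilon\) and the groundstate level of a constant-potential problem governed by the behaviour of \(V\) at infinity, obtained from a rescaled test function concentrated at a point \(x_0\) with \(V(x_0)<\varliminf_{\abs{x}\to\infty}V\); and a compactness argument showing that below that level the minimizing sequence cannot lose all its mass. Where you differ is in how the second pillar is proved. You invoke a nonlocal Brezis--Lieb splitting plus a Lions dichotomy to track an escaping bubble; the paper's argument is lighter: Ekeland's principle already makes the weak limit \(u\) of the minimizing sequence a critical point, so if \(u\neq 0\) one concludes at once by weak lower semicontinuity, and if \(u=0\) one multiplies \(u_n\) by a cut-off \(\eta\) vanishing on a large ball \(B_R\) outside of which \(V\ge\mu\); local strong convergence to \(0\) shows that \(\eta u_n\) carries asymptotically the full energy and the full nonlocal term, whence \(c_\varepsilon\ge c_\varepsilon^\mu\) directly, with no decomposition into bubbles. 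Your route buys nothing extra here and costs you the delicate cross-term estimates you yourself flag as the main obstacle. One genuine imprecision to fix in your version: since \(V\) is only assumed to satisfy \(\varliminf_{\abs{x}\to\infty}V>0\) and need not converge at infinity, there is no ``autonomous limit equation'' along an escaping sequence \(y_n\); the escaping mass can only be compared with the constant-potential level \(c_\varepsilon^\mu\) for \(\mu<\varliminf_{\abs{x}\to\infty}V\), using \(V\ge\mu\) outside a compact set. This is harmless provided you choose \(\mu\) with \(V(x_0)<\nu<\mu<\varliminf_{\abs{x}\to\infty}V\), exactly as the paper does, so that the strict inequality \(c(V(x_0))<c(\mu)\) (which your scaling argument for \(\lambda\mapsto c(\lambda)\) correctly justifies) still closes the contradiction.
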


Proposition~\ref{propositionExistence} is a counterpart for the Choquard equation of Rabinotwitz's existence result for the nonlinear Schr\"odinger equation \cite{Rabinowitz1992}*{Theorem 4.33}.
\begin{proof}[Proof of Proposition~\ref{propositionExistence}]
By Ekeland's variational principle \cite{W}, there exists a minimizing sequence $(u_n)_{n\in\mathbb{N}}$ in $\mathcal{N}_\varepsilon$ for $c_\varepsilon$, such that, as \(n \to \infty\),
\begin{gather*}
\mathcal{I}_\varepsilon (u_n) \to c_\varepsilon ,\,\, \text{ and }\,\,
\mathcal{I}_\varepsilon'(u_n)-\lambda_n \mathcal{G}_{\varepsilon}'(u_n) \to 0 \qquad \text{ in } \bigl(H^1_V (\Rset^N)\bigr)'.
\end{gather*}
We first observe that the sequence \((u_n)_{n \in \Nset}\) is bounded, because
\[
\frac{p-1}{2p}\norm{u_n}^{2}_{\varepsilon}=\mathcal{I}_\varepsilon(u_n)-\frac{1}{2p}\langle \mathcal{I}_{\varepsilon}'(u_n),u_n\rangle\leq c_\varepsilon+o_n(\norm{u_n}_\varepsilon) .
\]
It follows then that \(\lambda_n \langle \mathcal{G}_\varepsilon'(u_n),u_n\rangle \to 0\) as \(n \to \infty\). By \eqref{boundaway2} again, we see that $\lambda_n\to 0$  as \(n \to \infty\). Note that $ \mathcal{G}_\varepsilon'(u_n)$ is bounded in the dual space  $\big(H^1_V (\Rset^N)\big)'$, in fact, for every \(\varphi \in H^1_V (\Rset^N)\), we have
\begin{equation*}
\begin{split}
|\langle \mathcal{G}_\varepsilon'(u_n),\varphi\rangle|&\leq 2\langle u_n,\varphi \rangle+2p\int_{\Rset^N}\bigl(I_\alpha*|u_n|^p\bigr)|u_n|^{p-2}u_n\varphi\\
&\leq C\bigl(\|u_n\|_\varepsilon+C\|u_n\|_{\varepsilon}^{2p-1}\bigr)\|\varphi\|_\varepsilon\leq C\|\varphi\|_\varepsilon.
\end{split}
\end{equation*}
Hence, $\mathcal{I}_{\varepsilon}'(u_n)\to 0$ as $n\to\infty$  in  $\big(H^1_V (\Rset^N)\big)'$.

Up to a subsequence we can assume that $u_n\rightharpoonup u$ weakly in $H^1_V (\Rset^N)$ and $u_n\to u$ almost everywhere in $\mathbb{R}^N$ as \(n \to \infty\). If \(u \ne 0\), we reach the conclusion. Indeed, $\mathcal{I}_\varepsilon'(u)=0$, which, together with the weakly lower semi-continuity of the norm, implies that
\begin{equation*}
\begin{split}
c_\varepsilon +o_n(1)\norm{u_n}_\varepsilon&=\mathcal{I}_\varepsilon(u_n)-\frac{1}{2p}\langle \mathcal{I}_\varepsilon'(u_n),u_n\rangle = \Bigl(\frac{1}{2}-\frac{1}{2p}\Bigr)\norm{u_n}_{\varepsilon}^{2}\\
&\geq \Bigl(\frac{1}{2}-\frac{1}{2p}\Bigr)\norm{u}_{\varepsilon}^{2} =\mathcal{I}_\varepsilon(u)-\frac{1}{2p}\langle \mathcal{I}_\varepsilon'(u),u\rangle=\mathcal{I}_\varepsilon(u)\geq c_\varepsilon\\
\end{split}
\end{equation*}
that is, the function $u$ is a minimizer for $c_\varepsilon$ and is thus a groundstate of the Choquard equation \eqref{eqChoquard} by Lemma~\ref{lem2.3}.

In order to conclude, we assume by contradiction that \(u = 0\).
We have then \(u_n \to 0\) in \(L_{\rm{loc}}^{{2 N p}/(N + \alpha)} (\Rset^N)\) as \(n \to \infty\).
We choose \(R > 0\), \(\delta>0\), \(\mu > \nu > 0\) and \(x_* \in \Rset^N\), such that
\begin{align*}
 V (x) &\ge \mu & &\text{ if } \abs{x} \ge R , &&\text{ and } &V (x)&\leq \nu &&\text{ for every } x\in B_{\delta}(x_*).
\end{align*}
and a cut-off function \(\eta \in C^\infty (\Rset^N)\) such that \(0 \le \eta \le 1\) in \(\Rset^N\), \(\eta = 0\) in \(B_R\) and \(\eta = 1\) on \(\Rset^N \setminus B_{2 R}\). We define the function \(v_n = \eta u_n\).
We have by our contradiction assumption, as \(n \to \infty\),
\begin{equation*}
\begin{split}
 \int_{\Rset^N} \varepsilon^2 \abs{\nabla v_n}^2 + \mu \abs{v_n}^2 &\le \int_{\Rset^N} \varepsilon^2 \abs{\nabla v_n}^2 + V \abs{v_n}^2 \\
 &\le \int_{\Rset^N} \varepsilon^2 \abs{\nabla u_n}^2 +  V \abs{u_n}^2 + o_n (1),
\end{split}
\end{equation*}
and
\[
 \int_{\Rset^N} (I_\alpha \ast \abs{v_n}^p) \abs{v_n}^p = \int_{\Rset^N} \bigl(I_\alpha \ast \abs{u_n}^p\bigr) \abs{u_n}^p + o_n (1).
\]
If we define
\[
 \mathcal{I}^\mu_\varepsilon (v) = \frac{1}{2} \int_{\Rset^N} \varepsilon^2 \abs{\nabla v}^2 + \mu \abs{v}^2
 - \frac{1}{2 p} \int_{\Rset^N} \bigl(I_\alpha \ast \abs{v}^p\bigr) \abs{v}^p,
\]
and if we take \(t_n \in (0, +\infty)\) such that
\[
 t_n^2 \int_{\Rset^N} \varepsilon^2 \abs{\nabla v_n}^2 + \mu \abs{v_n}^2
 = t_n^{2 p} \int_{\Rset^N} (I_\alpha \ast \abs{v_n}^p) \abs{v_n}^p,
\]
then $\limsup_{n \to \infty} t_n\leq 1$ and thus
\begin{equation}
\label{ineqLowerContradiction}
 c_\varepsilon = \lim_{n \to \infty} \mathcal{I}_\varepsilon (u_n)
 \ge \varliminf_{n \to \infty} \mathcal{I}_\varepsilon (t_n u_n) \ge \varliminf_{n \to \infty} \mathcal{I}^\mu_\varepsilon (t_n v_n)\ge c_{\varepsilon}^\mu,
\end{equation}
where
\[
 c_{\varepsilon}^\mu = \inf\, \bigl\{ \mathcal{I}^\mu_\varepsilon (v) \st v \in H^1 (\Rset^N)
\setminus\{0\} \text{ and }
  \langle {\mathcal{I}_\varepsilon^\mu}' (v), v \rangle = 0\bigr\}.
\]

Since \(\nu < \mu\), we have \(c_1^\nu < c_1^\mu\) and there exists a function \(\varphi \in C^\infty_c (\Rset^N) \setminus \{0\}\),
such that \(\langle \mathcal{I}_1^\nu{}' (\varphi), \varphi \rangle = 0\) and
\[
  \mathcal{I}_1^\nu{} (\varphi) < c_1^\mu.
\]
We next let
\(\varphi_\varepsilon (x) = \varepsilon^\frac{-\alpha}{2 (p - 1)} \varphi (\varepsilon^{-1}(x - x_*))\), and we observe that \(\langle \mathcal{I}_\varepsilon^\nu{}' (\varphi_\varepsilon), \varphi_\varepsilon \rangle = 0\) and
\[
 \mathcal{I}^{\nu}_\varepsilon (\varphi_\varepsilon) <c_{\varepsilon}^\mu.
\]
On the other hand, by the definition of $c_\varepsilon$, we have
\begin{equation*}
\begin{split}
c_\varepsilon &\leq \max_{t\geq 0}\mathcal{I}_{\varepsilon}(t\varphi_\varepsilon)
=\Big(\frac{1}{2}-\frac{1}{2p}\Big)
\frac{\displaystyle \Bigl(\int_{\Rset^N}\varepsilon^2\abs{\nabla \varphi_\varepsilon}^2+V \abs{\varphi_\varepsilon}^2\Bigr)^\frac{p}{p - 1}}{\Bigl(\displaystyle \int_{\Rset^N}(I_\alpha*\abs{\varphi_\varepsilon}^p)
\abs{\varphi_\varepsilon}^p\Bigr)^{\frac{1}{p - 1}}}\\
%
\end{split}
\end{equation*}
When $\varepsilon$ is small enough so that \(\varepsilon \supp \varphi \subset B_\delta (x_*)\), we have \(V \le \nu\) on \(x_* + \varepsilon \supp \varphi\) and we conclude that
\[
c_\varepsilon 
\leq \Big(\frac{1}{2}-\frac{1}{2p}\Big)
\frac{\displaystyle \Bigl(\int_{\Rset^N}\varepsilon^2\abs{\nabla \varphi_\varepsilon}^2+\nu \abs{\varphi_\varepsilon}^2\Bigr)^\frac{p}{p - 1}}{\Bigl(\displaystyle \int_{\Rset^N}(I_\alpha*\abs{\varphi_\varepsilon}^p)
\abs{\varphi_\varepsilon}^p\Bigr)^{\frac{1}{p - 1}}} \le \mathcal{I}_{\varepsilon}^\nu(\varphi_\varepsilon)<c_{\varepsilon}^\mu,
\]
which contradicts the lower bound \eqref{ineqLowerContradiction}.
\end{proof}

\section{Asymptotics for potential with homogeneous zeroes}
\label{sectionGammaHomogeneous}
\subsection{Asymptotic upper bound}
\label{sectionGroundstates}

We define the \emph{upper concentration function} \(\Bar{\mathcal{C}} : \Rset^N \to \Rset\) for \(x \in \Rset^N\) by
\begin{multline*}
  \Bar{\mathcal{C}} (x)
  = \inf\,\Bigl\{\mathcal{E} (\Bar{W}) \st \Bar{W} \in C (\Rset^N)  \text{ is positive and \(\gamma\)--homogeneous and } \\
  \varlimsup_{z \to x} \frac{V (z)  - \Bar{W} (z - x)}{\abs{z - x}^\gamma} \le 0 \Bigr\}.
\end{multline*}
The quantity \(\mathcal{E} (\Bar{W})\) was defined
in \eqref{defEW} as the groundstate energy of the limiting functional \(\mathcal{J}_{\Bar{W}}\) defined in \eqref{defJWv} on the weighted Sobolev space \(H^1_{\Bar{W}} (\Rset^N)\).

The assumptions of Theorem~\ref{theoremGammaHomogeneous} ensure that \(\Bar{\mathcal{C}} = \mathcal{C}\)
everywhere in \(\Rset^N\). Indeed, if \(\lim_{z \to x} V (x) /\abs{z- x}^\gamma = +\infty\), then there is no function \(\Bar{W}\) satisfying the condition and thus \(\Bar{\mathcal{C}} (x) = +\infty = \mathcal{C} (x)\). Otherwise, there exists a positive \(\gamma\)--homogeneous function  \(W \in C (\Rset^N)\) such that
\[
 \lim_{z \to x} \frac{V (z)  - W (z - x)}{\abs{z - x}^\gamma} = 0,
\]
and thus we have \(\mathcal{C} (x) = \mathcal{E} (W) \ge \Bar{\mathcal{C}} (x)\).
Moreover, if \(\Bar{W} \in C (\Rset^N)\) is positive and \(\gamma\)--homogeneous and if
\[
  \lim_{z \to x} \frac{V (z)  - \Bar{W} (z - x)}{\abs{z - x}^\gamma} \le 0,
\]
then \(W \le \Bar{W}\) in \(\Rset^N\), and thus \(\mathcal{E} (W) \le \mathcal{E} (\Bar{W})\), so that by taking the infimum, \(\mathcal{C} (x) \le \Bar{\mathcal{C}} (x)\).

To alleviate the notation, we fix for the rest of this section
\[
  \kappa=\frac{2}{\gamma+2}.
\]

\begin{proposition}\label{propositionUpperBound}
One has
\[
\varlimsup_{\varepsilon\to 0}\frac{c_\varepsilon}{\varepsilon^{\kappa (N + \frac{p \gamma - \alpha}{p - 1})}} \leq \inf_{\Rset^N} \Bar{\mathcal{C}}.
\]
\end{proposition}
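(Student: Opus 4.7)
The plan is to construct a test function on the Nehari manifold $\mathcal{N}_\varepsilon$ by rescaling a compactly supported approximation to a groundstate of the limiting functional $\mathcal{J}_{\Bar W}$, then use the Nehari characterization $c_\varepsilon \le \max_{t\ge 0}\mathcal{I}_\varepsilon(t\cdot)$ to bound $c_\varepsilon$ by the energy of this test function. We may assume $\inf_x \Bar{\mathcal{C}}(x) < +\infty$; fix $x \in \Rset^N$ and a positive $\gamma$-homogeneous $\Bar W \in C(\Rset^N)$ satisfying $\varlimsup_{z\to x}\frac{V(z) - \Bar W(z-x)}{\abs{z-x}^\gamma} \le 0$, and fix any $\varphi \in C^\infty_c(\Rset^N)\setminus\{0\}$. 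The test function is
\[
u_\varepsilon(z) = \varepsilon^{-\sigma}\varphi\bigl(\varepsilon^{-\kappa}(z-x)\bigr), \qquad \sigma = \frac{\alpha-\gamma}{(p-1)(\gamma+2)},
\]
which lies in $H^1_V(\Rset^N)$ for every $\varepsilon > 0$ by compactness of $\supp \varphi$.

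Under the change of variables $z = x + \varepsilon^\kappa y$, using the $(\alpha-N)$-homogeneity of $I_\alpha$ for the nonlocal term and the $\gamma$-homogeneity of $\Bar W$ for the potential term, a direct calculation confirms that all three terms in $\mathcal{I}_\varepsilon(u_\varepsilon)$ carry the common power $\varepsilon^{\kappa E}$ with $E = N + \frac{p\gamma-\alpha}{p-1}$ -- indeed, this requirement is precisely what pins down $\sigma$ and $\kappa = 2/(\gamma+2)$. The gradient and nonlocal terms yield exactly $\int_{\Rset^N}\abs{\nabla \varphi}^2$ and $\int_{\Rset^N}(I_\alpha \ast \abs{\varphi}^p)\abs{\varphi}^p$ after normalization by $\varepsilon^{\kappa E}$. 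For the potential term, the hypothesis on $\Bar W$ provides, for any $\eta > 0$, a radius $\delta > 0$ with $V(z) \le \Bar W(z-x) + \eta\abs{z-x}^\gamma$ on $B_\delta(x)$; applied on $x + \varepsilon^\kappa\supp\varphi \subset B_\delta(x)$ for $\varepsilon$ small this gives
\[
\varepsilon^{-\kappa E}\int_{\Rset^N} V\abs{u_\varepsilon}^2 \le \int_{\Rset^N} \bigl(\Bar W + \eta\abs{\cdot}^\gamma\bigr)\abs{\varphi}^2,
\]
whence $\varlimsup_{\varepsilon \to 0}\varepsilon^{-\kappa E}\int_{\Rset^N} V\abs{u_\varepsilon}^2 \le \int_{\Rset^N} \Bar W\abs{\varphi}^2$ after letting $\eta \to 0$.

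Since the map $t \mapsto \mathcal{I}_\varepsilon(tu_\varepsilon) = \frac{t^2}{2}A_\varepsilon - \frac{t^{2p}}{2p}B_\varepsilon$ is maximized by an explicit formula in $A_\varepsilon = \int_{\Rset^N}\varepsilon^2\abs{\nabla u_\varepsilon}^2 + V\abs{u_\varepsilon}^2$ and $B_\varepsilon = \int_{\Rset^N}(I_\alpha \ast \abs{u_\varepsilon}^p)\abs{u_\varepsilon}^p$, Lemma~\ref{lem2.3} yields $c_\varepsilon \le \max_{t\ge 0}\mathcal{I}_\varepsilon(tu_\varepsilon)$; combining with the scaling estimates produces $\varlimsup_{\varepsilon \to 0}\varepsilon^{-\kappa E}c_\varepsilon \le \max_{t\ge 0}\mathcal{J}_{\Bar W}(t\varphi)$. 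Taking the infimum over $\varphi \in C^\infty_c(\Rset^N) \setminus \{0\}$, the density of $C^\infty_c$ in $H^1_{\Bar W}$ together with the continuity of the Nehari projection identifies this infimum with $\mathcal{E}(\Bar W)$; a further infimum over admissible pairs $(x, \Bar W)$ gives $\inf_x\Bar{\mathcal{C}}(x)$. The main subtlety is the non-uniformity in $x$ of the asymptotic comparison between $V$ and $\Bar W$, which is circumvented pointwise via the auxiliary parameter $\eta$ and by working with compactly supported $\varphi$ before letting it approximate a minimizer of $\mathcal{J}_{\Bar W}$.
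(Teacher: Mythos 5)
Your proposal is correct and follows essentially the same route as the paper: the same rescaled test function $\varepsilon^{\frac{\gamma-\alpha}{(p-1)(\gamma+2)}}\varphi(\varepsilon^{-\kappa}(\cdot-x))$, the same scaling computation producing the common power $\varepsilon^{\kappa(N+\frac{p\gamma-\alpha}{p-1})}$, the Nehari bound $c_\varepsilon\le\max_{t\ge 0}\mathcal{I}_\varepsilon(t\,\cdot)$, and the successive infima over $\varphi$ (via density of $C^\infty_c$ in $H^1_{\Bar W}$), over $\Bar W$, and over $x$. Your $\eta$--$\delta$ handling of the potential term, giving only a $\varlimsup$ upper bound, is in fact a slightly more careful rendering of the paper's dominated-convergence step under the one-sided hypothesis on $\Bar W$.
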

\begin{proof}
Let \(x_* \in \Rset^N\), let \(\Bar{W} \in C (\Rset^N)\) be a positive \(\gamma\)--homogeneous function
such that
\begin{equation}
\label{eqUpperBoundApproxW}
 \lim_{x \to x_*} \frac{V (x) - \Bar{W} (x - x_*)}{\abs{x - x_*}^\gamma} \le 0
\end{equation}
and let \(\varphi \in C^\infty_c (\Rset^N) \setminus \{0\}\).
For \(\varepsilon > 0\), we define the function \(\varphi_\varepsilon : \Rset^N \to \Rset\) for each \(x \in \Rset^N\) by
\[
 \varphi_\varepsilon (x) = \varepsilon^{{\kappa}\frac{\gamma - \alpha}{2(p - 1)}} \varphi \Bigl(\frac{x - x_*}{\varepsilon^\kappa} \Bigr).
\]
We observe that by homogeneity and scaling, we have for each \(\varepsilon > 0\), 
\begin{gather*}
 \int_{\Rset^N} \varepsilon^2 \abs{\nabla \varphi_\varepsilon}^2
 = \varepsilon^{\kappa (N + \frac{p \gamma - \alpha}{p - 1})}\int_{\Rset^N} \abs{\nabla \varphi}^2,\\
 \int_{\Rset^N} V \abs{\varphi_\varepsilon}^2
 = \varepsilon^{\kappa (N + \frac{p \gamma - \alpha}{p - 1})} \int_{\Rset^N} \frac{V (x_* + \varepsilon^\kappa y)}{\varepsilon^{\kappa \gamma}} \abs{\varphi (y)}^2 \dif y, \\
 \int_{\Rset^N} \bigl(I_\alpha \ast \abs{\varphi_\varepsilon}^p\bigr) \abs{\varphi_\varepsilon}^p
 = \varepsilon^{\kappa (N + \frac{p \gamma - \alpha}{p - 1})} \int_{\Rset^N} (I_\alpha \ast \abs{\varphi}^p) \abs{\varphi}^p.
\end{gather*}
Since the function \(\Bar{W}\) is \(\gamma\)--homogeneous and satisfies \eqref{eqUpperBoundApproxW}, we have for each \(y \in \Rset^N\),
\[
 \lim_{\varepsilon \to 0}  \frac{V (x_* + \varepsilon^\kappa y)}{\varepsilon^{\kappa \gamma}}
 = \Bar{W} (y) + \abs{y}^\gamma \lim_{\varepsilon \to 0} \frac{V (x_* + \varepsilon^\kappa y) - \Bar{W} (\varepsilon^\kappa y)}{\abs{\varepsilon^{\kappa} y}^\gamma} = \Bar{W} (y),
\]
uniformly when \(y\) stays in the support of \(\varphi\) which is compact by assumption.
Thus by Lebesgue's dominated convergence theorem, we have
\[
 \lim_{\varepsilon \to 0} \frac{1}{\varepsilon^{\kappa (N + \frac{p \gamma - \alpha}{p - 1})}} \int_{\Rset^N} V \abs{\varphi_\varepsilon}^2
 = \int_{\Rset^N} \Bar{W} \abs{\varphi}^2.
\]
For every \(\varepsilon > 0\), we fix \(t_\varepsilon \in (0, +\infty)\) in such a way that
\[
 t_\varepsilon^2 \int_{\Rset^N} \varepsilon^2 \abs{\nabla \varphi_\varepsilon}^2 + V \abs{\varphi_\varepsilon}^2 =
 t_\varepsilon^{2 p} \int_{\Rset^N} \bigl(I_\alpha \ast \abs{\varphi_\varepsilon}^p\bigr) \abs{\varphi_\varepsilon}^p,
\]
and we observe that \(\lim_{\varepsilon \to 0} t_\varepsilon = t_*\), where \(t_* \in (0, +\infty)\) is characterized by 
\[
 t_*^2 \int_{\Rset^N} \abs{\nabla \varphi}^2 + \Bar{W} \abs{\varphi}^2 =
 t_*^{2 p} \int_{\Rset^N} \bigl(I_\alpha \ast \abs{\varphi}^p\bigr) \abs{\varphi}^p.
\]
We have then
\[
\begin{split}
 \varlimsup_{\varepsilon \to 0} \frac{c_\varepsilon}{\varepsilon^{\kappa (N + \frac{p \gamma - \alpha}{p - 1})}}
 \le \lim_{\varepsilon \to 0} \sup_{t \in (0, +\infty)} \frac{\mathcal{I}_{\varepsilon} (t \varphi_\varepsilon)}{\varepsilon^{\kappa (N + \frac{p \gamma - \alpha}{p - 1})}}
 &= \lim_{\varepsilon \to 0} \frac{\mathcal{I}_{\varepsilon} (t_\varepsilon \varphi_\varepsilon)}{\varepsilon^{\kappa (N + \frac{p \gamma - \alpha}{p - 1})}}\\
 & = \mathcal{J}_{\Bar{W}} (t_* \varphi) = \sup_{t \in (0, +\infty)} \mathcal{J}_{\Bar{W}} (\varphi).
\end{split}
\]
Since the left-hand side is independent of \(\varphi\), taking the infimum with respect to \(\varphi\) and by density of the set of smooth test functions \(C^\infty_c (\Rset^N)\) in
the weighted space \(H^1_{\Bar{W}} (\Rset^N)\), we have
\[
 \varlimsup_{\varepsilon \to 0} \frac{c_\varepsilon}{\varepsilon^{\kappa (N + \frac{p \gamma - \alpha}{p - 1})}}
 \le \mathcal{E} (\Bar{W}).
\]
Since the left-hand side does not depend on \(\Bar{W}\), by taking now the infimum with respect to suitable positive \(\gamma\)--homogeneous functions \(\Bar{W} \in C (\Rset^N)\), we deduce that
\[
 \varlimsup_{\varepsilon \to 0} \frac{c_\varepsilon}{\varepsilon^{\kappa (N + \frac{p \gamma - \alpha}{p - 1})}}
 \le \Bar{\mathcal{C}} (x_*).
\]
Since the point \(x_* \in \Rset^N\) is arbitrary, the conclusion follows.
\end{proof}

\subsection{Asymptotic lower bound and behaviour of solutions}
\label{sectionConcentration}
We define the lower concentration function \(\underline{\mathcal{C}} : \Rset^N \to \Rset\) by
\begin{multline*}
  \underline{\mathcal{C}} (x)
  = \sup \Bigl\{\mathcal{E} (W) \st W \in C (\Rset^N)  \text{ is positive and \(\gamma\)--homogeneous and } \\
  \varliminf_{z \to x} \frac{V (z)  - W (z - x)}{\abs{z - x}^\gamma} \ge 0 \Bigr\},
\end{multline*}
where the quantity \(\mathcal{E} (W)\) was defined in \eqref{defEW}.

Under the assumptions of Theorem~\ref{theoremGammaHomogeneous}, we have \(\underline{\mathcal{C}} = \mathcal{C}\). Indeed, if \(\lim_{z \to x} V (x) /\abs{z- x}^\gamma = +\infty\), then we can take any positive and \(\gamma\)--homogeneous function in the definition of the lower concentration function \(\underline{\mathcal{C}}\) and thus \(\underline{\mathcal{C}} (x) = +\infty \).  Otherwise, there exists a positive  and \(\gamma\)--homogeneous function  \(W \in C (\Rset^N)\) such that
\[
 \lim_{z \to x} \frac{V (z)  - W (z - x)}{\abs{z - x}^\gamma} = 0
\]
and thus \(\mathcal{C} (x) \le \underline{\mathcal{C}} (x)\).
Moreover, if
\[
 \lim_{z \to x} \frac{V (z)  - \underline{W} (z - x)}{\abs{z - x}^\gamma} \ge 0,
\]
then \(\underline{W} \le W\) on \(\Rset^N\) and by monotonicity of \(\mathcal{E}\), we have \(\mathcal{E} (\underline{W})  \le \mathcal{E} (W) = \mathcal{C} (x)\); it follows then that \(\underline{\mathcal{C}} (x) \le \mathcal{C} (x)\).

\begin{proposition}
\label{propositionLowerBound}
Let \((\varepsilon_n)_{n \in \Nset}\) be a sequence of positive numbers converging to \(0\) and \((u_n)_{n \in \Nset}\) be solutions in \(H^1_V (\Rset^N)\) of problem (\(\mathcal{C}_{\varepsilon_n}\)).
If
\[
 \varliminf_{n \to \infty} \frac{1}{\varepsilon_n^{\kappa (N + \frac{p \gamma - \alpha}{p - 1})}} \mathcal{I}_{\varepsilon_n} (u_n) < + \infty.
\]
Then up to a subsequence, there exists \(R_* > 0\) and \(x_* \in \Rset^N\) such that
\[
  \lim_{n \to \infty} \frac{1}{\varepsilon_n^{\kappa (N + \frac{\gamma - \alpha}{p - 1})}} \int_{B_{\varepsilon_n^\kappa R_*} (x_*)} \abs{u_n}^2 > 0
\]
and
\[
\underline{\mathcal{C}}  (x_*) \le \varliminf_{n \to \infty} \frac{1}{\varepsilon_n^{\kappa (N + \frac{p \gamma - \alpha}{p - 1})}} \mathcal{I}_{\varepsilon_n} (u_n) .
\]
If moreover \(W\) is a positive \(\gamma\)--homogeneous function such that
\[
 \lim_{x \to x_*} \frac{V (x) - W (x - x_*)}{\abs{x - x_*}^\gamma} = 0,
\]
then there exists \(v_* \in H_W^1 (\Rset^N)\setminus\{0\}\) such that
\[
  \varepsilon_n^{\kappa \frac{\gamma - \alpha}{2(p - 1)}} u_n (x_* + \varepsilon_n^\kappa \cdot)
  \weakto v_* \qquad \text{weakly in \(H^1_\mathrm{loc} (\Rset^N)\)},
\]
\(v_*\) is a weak solution to
\[
 -\Delta v_* + W v_* = (I_\alpha \ast \abs{v_*}^p) \abs{v_*}^{p - 2} v_*
\]
and
\[
 \underline{\mathcal{C}} (x_*) \le \mathcal{J}_W (v_*) \le  \varliminf_{n \to \infty} \frac{1}{\varepsilon_n^{\kappa (N + \frac{p \gamma - \alpha}{p - 1})}} \mathcal{I}_{\varepsilon_n} (u_n).
\]
\end{proposition}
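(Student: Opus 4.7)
The plan is to work with the natural rescaling $v_n(y) = \varepsilon_n^{\kappa(\gamma-\alpha)/(2(p-1))} u_n(y_n + \varepsilon_n^\kappa y)$ about centers $y_n \in \Rset^N$ to be determined, which recasts the Choquard equation $(\mathcal{C}_{\varepsilon_n})$ as
\begin{equation*}
-\Delta v_n + \tilde{V}_n v_n = (I_\alpha \ast |v_n|^p) |v_n|^{p-2} v_n \qquad \text{in } \Rset^N,
\end{equation*}
where $\tilde{V}_n(y) = V(y_n + \varepsilon_n^\kappa y)/\varepsilon_n^{\kappa\gamma}$; the change of variables also gives $\mathcal{I}_{\varepsilon_n}(u_n) = \varepsilon_n^{\kappa(N + (p\gamma-\alpha)/(p-1))}\,\tilde{\mathcal{I}}_n(v_n)$, where $\tilde{\mathcal{I}}_n$ is the natural functional of the rescaled problem. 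The hypothesis yields a uniform upper bound on $\tilde{\mathcal{I}}_n(v_n)$, and since $v_n$ is a critical point, the Nehari identity $\int|\nabla v_n|^2 + \tilde{V}_n |v_n|^2 = \int(I_\alpha \ast |v_n|^p)|v_n|^p$ gives a uniform bound on both of these quantities.

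First I would prove non-vanishing. Nehari together with the Hardy--Littlewood--Sobolev inequality~\eqref{HLS} and Sobolev embedding, as in the proof of Lemma~\ref{lem2.3}, gives a positive uniform lower bound on $\int|\nabla v_n|^2 + \tilde V_n|v_n|^2$. A Lions-type concentration-compactness argument (adapted to the weighted setting via a Lemma~\ref{lem2.1}-type control of $\int|v_n|^2$ by the weighted norm, using the rescaled coercivity $\tilde V_n \to +\infty$ outside arbitrarily large balls) then produces centers $y_n' \in \Rset^N$ and $R_*, \eta > 0$ with $\int_{B_{R_*}(y_n')}|v_n|^2 \ge \eta$; translated back to the original variable, this gives the desired mass lower bound around $y_n + \varepsilon_n^\kappa y_n'$. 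Replacing $y_n$ by $y_n + \varepsilon_n^\kappa y_n'$, I would then argue tightness: if $|y_n|\to\infty$ then $\tilde V_n \to +\infty$ locally uniformly (because $V$ is bounded below at infinity), and the uniform bound on $\int\tilde V_n|v_n|^2$ would force $v_n\to 0$ in $L^2_{\mathrm{loc}}$, contradicting non-vanishing. A subsequence therefore converges to some $x_* \in \Rset^N$.

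Extracting a weak $H^1_{\mathrm{loc}}(\Rset^N)$ limit $v_*$ of $v_n$, Rellich--Kondrachov gives strong $L^2_{\mathrm{loc}}$ convergence, and the mass lower bound forces $v_* \not\equiv 0$. If $V$ is asymptotically $W$-homogeneous at $x_*$, then $\tilde V_n \to W$ locally uniformly on $\Rset^N$, and passing to the limit in the weak formulation of the rescaled equation tested against $\varphi \in C^\infty_c(\Rset^N)$---handling the Choquard term via Hardy--Littlewood--Sobolev combined with strong local convergence---identifies $v_*$ as a weak solution of $-\Delta v_* + W v_* = (I_\alpha\ast|v_*|^p)|v_*|^{p-2}v_*$. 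Weak lower semicontinuity of $\int|\nabla \cdot|^2$ and Fatou applied to $\tilde V_n|v_n|^2$ (using $\varliminf\tilde V_n \ge W$) give
\begin{equation*}
\int|\nabla v_*|^2 + W|v_*|^2 \le \varliminf_{n\to\infty}\int|\nabla v_n|^2 + \tilde{V}_n|v_n|^2,
\end{equation*}
and since both $\mathcal{J}_W(v_*) = \tfrac{p-1}{2p}\|v_*\|^2_{H^1_W}$ and $\tilde{\mathcal{I}}_n(v_n) = \tfrac{p-1}{2p}\|v_n\|^2_{\tilde V_n}$ by the respective Nehari identities, this yields $\mathcal{J}_W(v_*) \le \varliminf\,\varepsilon_n^{-\kappa(N+(p\gamma-\alpha)/(p-1))}\mathcal{I}_{\varepsilon_n}(u_n)$, and $\mathcal{E}(W) \le \mathcal{J}_W(v_*)$ since $v_* \in \mathcal{N}_W\setminus\{0\}$. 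The general lower bound $\underline{\mathcal{C}}(x_*) \le \varliminf\ldots$ follows by running the same Fatou and weak-semicontinuity argument with any positive $\gamma$-homogeneous $W'$ satisfying the $\liminf$ condition (which gives $\varliminf\tilde V_n \ge W'$) and invoking monotonicity of $\mathcal{E}$ in the potential.

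The main obstacle I anticipate is controlling the nonlocal Choquard nonlinearity under only weak $H^1_{\mathrm{loc}}$ convergence: both the identification of $v_*$ as a solution of the limit equation and the absence of mass loss at infinity in the rescaled variable require either a Brezis--Lieb-type decomposition of the Choquard term or an independent uniform decay estimate on $v_n$, the latter typically obtained by a bootstrap of the equation itself. A related subtlety is the tightness of the centers $y_n$, which depends on the rescaled coercivity of $V$ and on ruling out the possibility that $x_*$ is a point where $V(z)/|z-x_*|^\gamma$ blows up, itself excluded by the uniform bound on $\int\tilde V_n |v_n|^2$ combined with the non-vanishing of $\int_{B_{R_*}}|v_n|^2$.
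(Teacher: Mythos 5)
Your overall strategy coincides with the paper's: rescale at scale \(\varepsilon_n^\kappa\), run a Lions-type non-vanishing argument, locate the concentration point at a zero of \(V\), pass to a weak limit, and combine Fatou with weak lower semicontinuity and the Nehari constraint. However, two quantitative steps that carry most of the weight of the proof are treated as black boxes, and the heuristics you offer for them would not close the argument.

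First, the Lions step needs a uniform estimate of the form
\[
\int_{B_{\varepsilon^\kappa}(x)}\abs{u}^2 \le C\,\varepsilon^{-\kappa\gamma}\int_{B_{\varepsilon^\kappa}(x)}\varepsilon^2\abs{\nabla u}^2 + V\abs{u}^2
\]
valid for \emph{every} center \(x\in\Rset^N\), including balls on which \(V\) is small but nonzero. ``Rescaled coercivity of \(\tilde V_n\) outside large balls'' is a statement about the potential near a zero of \(V\) and cannot supply this uniformity. The paper first proves (Lemma~\ref{lemmaLowerBoundVEverywhere}) that \(V^{-1}(0)\) is a finite set \(\{a_1,\dotsc,a_k\}\) with \(V\ge\min\{\mu,\nu\abs{x-a_i}^\gamma\}\), then uses the resulting measure bound on the sublevel set \(\{V<c\,\varepsilon^{\kappa\gamma}\}\) to establish Lemma~\ref{lemglobal}. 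The precise power \(\varepsilon^{-\kappa\gamma}\) is exactly what produces the exponents \(\kappa(N+\frac{Np}{N+\alpha}\frac{\gamma-\alpha}{p-1})\) and \(\kappa(N+\frac{\gamma-\alpha}{p-1})\) in the non-vanishing statement, so a qualitative coercivity argument cannot replace it.

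Second, your tightness argument (``if \(\abs{y_n}\to\infty\) then \(\tilde V_n\to+\infty\) locally uniformly'') only excludes escape to infinity; the conclusion requires the quantitative rate \(\operatorname{dist}(y_n,V^{-1}(0))=O(\varepsilon_n^\kappa)\), because the mass lower bound must hold on \(B_{R_*\varepsilon_n^\kappa}(x_*)\) centered at the \emph{fixed} point \(x_*\). If \(y_n\to x_*\) more slowly than \(\varepsilon_n^\kappa\), the functions rescaled about \(x_*\) converge weakly to zero. The paper gets the rate by combining the \(L^2\) lower bound with the energy upper bound to obtain \(\inf_{B_{\varepsilon_n^\kappa}(y_n)}V\lesssim\varepsilon_n^{\kappa\gamma}\) and then invoking the pointwise lower bound of Lemma~\ref{lemmaLowerBoundVEverywhere}. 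Two further points you flag but leave open: the nonlocal term is handled in the paper by splitting \(I_\alpha\ast\abs{v_n}^p\) into a near part (strong local convergence plus Hardy--Littlewood--Sobolev) and a far part (weak convergence of \(\abs{v_n}^p\) in \(L^{2N/(N+\alpha)}(\Rset^N)\), giving locally uniform convergence of the Riesz potential), so no Brezis--Lieb decomposition or decay estimate is required; and in the general case \(\varliminf (V-W')/\abs{\cdot}^\gamma\ge 0\) the limit \(v_*\) is only a subsolution, so it need not satisfy \(\langle\mathcal{J}_{W'}'(v_*),v_*\rangle=0\) and one must first find \(t_*\in(0,1]\) placing \(t_*v_*\) on the Nehari manifold before concluding \(\mathcal{E}(W')\le\mathcal{J}_{W'}(t_*v_*)\). (Minor: the correct rescaling factor is \(\varepsilon_n^{\kappa\frac{\alpha-\gamma}{2(p-1)}}\), as in Theorem~\ref{theoremGammaHomogeneous}; the sign you copied from the proposition statement is inconsistent with the identity \(\mathcal{I}_{\varepsilon_n}(u_n)=\varepsilon_n^{\kappa(N+\frac{p\gamma-\alpha}{p-1})}\tilde{\mathcal{I}}_n(v_n)\) you assert.)
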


In order to prepare the proof of Proposition~\ref{propositionLowerBound}, we first give a lower bound on the potential \(V\).

\begin{lemma}
\label{lemmaLowerBoundVEverywhere}
Let \(V \in C (\Rset^N)\) and \(\gamma > 0\). If \(V \ge 0\) on \(\Rset^N\),
\[
 \varliminf_{\abs{x} \to \infty} V (x) > 0
\]
and if for each \(x \in \Rset^N\)
\[
 \varliminf_{z \to x} \frac{V (z)}{\abs{z - x}^\gamma} > 0,
\]
then there exist \(k \in \Nset\), \(a_1, \dotsc, a_k \in \Rset^N\), \(\mu > 0\) and \(\nu > 0\) such that for each
\(x \in \Rset^N\),
\[
  V (x) \ge \min \bigl\{\mu, \nu \abs{x - a_1}^\gamma, \dotsc, \nu \abs{x - a_k}^\gamma\bigr\}.
\]
\end{lemma}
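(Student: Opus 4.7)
The plan is to first localize the problem to the zero set of $V$. Let $Z = \{x \in \Rset^N \st V(x) = 0\}$. Since $V$ is continuous, $Z$ is closed, and since $\varliminf_{\abs{x}\to\infty} V(x) > 0$, the set $Z$ is bounded, hence compact. The hypothesis $\varliminf_{z \to x} V(z)/\abs{z - x}^\gamma > 0$ is only nontrivial at points of $Z$; at points outside $Z$, positivity of $V$ and continuity of $V$ automatically force the $\varliminf$ to be $+\infty$.

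Next, I would unwrap the $\varliminf$ hypothesis at each point of $Z$. For every $a \in Z$, by definition of the limit inferior, there exist constants $\nu_a > 0$ and $r_a > 0$ such that
\[
V(z) \ge \nu_a \abs{z - a}^\gamma \qquad \text{for every } z \in B_{r_a}(a).
\]
By compactness of $Z$, I would extract from the open cover $\{B_{r_a/2}(a) \st a \in Z\}$ a finite subcover corresponding to points $a_1, \dotsc, a_k \in Z$. Set
\[
\nu = \min_{1 \le j \le k} \nu_{a_j} > 0, \qquad \Omega = \bigcup_{j=1}^k B_{r_{a_j}/2}(a_j).
\]
Thus for every $x \in B_{r_{a_j}/2}(a_j)$ one has $V(x) \ge \nu \abs{x - a_j}^\gamma$ for the relevant index $j$.

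Then I would deal with the complement $\Rset^N \setminus \Omega$, which is closed and avoids $Z$, hence $V > 0$ pointwise on it. I would pick $R > 0$ large enough that $V(x) \ge \mu_1$ for some $\mu_1 > 0$ whenever $\abs{x} \ge R$ (using the assumption at infinity), and observe that on the compact set $(\Rset^N \setminus \Omega) \cap \Bar{B_R}$ the continuous positive function $V$ attains a positive minimum $\mu_2 > 0$. Setting $\mu = \min\{\mu_1, \mu_2\} > 0$ yields $V \ge \mu$ on $\Rset^N \setminus \Omega$.

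Finally, I would combine the two bounds. For an arbitrary $x \in \Rset^N$, either $x \in B_{r_{a_j}/2}(a_j)$ for some $j$, in which case $V(x) \ge \nu \abs{x - a_j}^\gamma$ and hence $V(x)$ is bounded below by the minimum in the statement; or $x \notin \Omega$, in which case $V(x) \ge \mu$ and again $V(x)$ dominates the minimum. This gives the desired inequality. The argument is essentially a compactness extraction; the only point requiring a moment of care is arranging that the complement $\Rset^N \setminus \Omega$ is a closed set on which a continuous positive function with a positive limit at infinity is uniformly bounded below, which follows by splitting into a compact piece and the exterior of a large ball.
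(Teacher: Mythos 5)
Your proof is correct and follows essentially the same route as the paper: compactness of the zero set, a local lower bound \(V(z)\ge \nu_a\abs{z-a}^\gamma\) near each zero extracted from the liminf hypothesis, and a uniform positive lower bound away from the zeros obtained by splitting the complement into a compact piece and the exterior of a large ball. The only cosmetic difference is that the paper first notes the zero set is finite (each zero is isolated in it), whereas you bypass finiteness by taking a finite subcover; both arguments yield the stated estimate.
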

\begin{proof}
We define the set \(K = V^{-1} (\{0\})\). Since the function \(V\) is continuous and \(\varliminf_{\abs{x} \to \infty} V (x) > 0\),
the set \(K\) is compact.
If \(x \in K\), we have \(\varliminf_{z \to x} {V (z)}/{\abs{z - x}^\gamma}>0 \) and there exists
thus \(\delta > 0\) such that \(B_\delta (x) \cap K = \{x\}\). Hence, the set \(K\) is finite and can be written as
\(K = \{a_1, \dotsc, a_k\}\) with \(k \in \Nset\) and \(a_1, \dotsc, a_k \in \Rset^N\).
Moreover, there exist  \(\rho > 0\) and \(\nu > 0\) such that if \(j \in \{1, \dotsc, k\}\) and
\(x \in B_{\rho} (a_j)\), then \(V (x) \ge \nu \abs{x - a_j}^\gamma\).
Since \(\varliminf_{\abs{x} \to \infty} V (x) > 0\), there exists \(\mu > 0\) such that
\(V (x) \ge \mu\) for every \(x \in \Rset^N \setminus \bigcup_{j= 1}^k B_\rho (a_j)\). The conclusion follows.
\end{proof}

Thanks to Lemma~\ref{lemmaLowerBoundVEverywhere}, we establish a uniform estimate on rescaled balls of $\mathbb{R}^N$, which is very useful in our subsequent arguments.

\begin{lemma}\label{lemglobal}There exists a positive number $C$, such that if \(\varepsilon\) is sufficiently small, then
for every $u\in H^1 (B_{\varepsilon^{\kappa}}(x))$ and every $x\in \mathbb{R}^N$, we have
\begin{equation*}
\int_{B_{\varepsilon^{\kappa}}(x)} \varepsilon^{2\kappa} \abs{\nabla u}^2+\abs{u}^2
\leq C \varepsilon^{-\kappa \gamma}  \int_{B_{\varepsilon^{\kappa}} (x)} \varepsilon^2 \abs{\nabla u}^2+  V \abs{u}^2.
\end{equation*}
\end{lemma}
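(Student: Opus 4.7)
The plan is to perform the natural change of variables $y = (z - x)/\varepsilon^\kappa$ on the ball $B_{\varepsilon^\kappa} (x)$, setting $\tilde u (y) = u (x + \varepsilon^\kappa y)$ on $B_1$. Because $\kappa = 2/(\gamma + 2)$ satisfies $2 - 2 \kappa = \kappa \gamma$, the Jacobian factor $\varepsilon^{\kappa N}$ pulls out of both sides and the prefactors of the two gradient terms coincide; after absorbing the common gradient contribution, the lemma reduces to finding $C > 0$ such that for every sufficiently small $\varepsilon > 0$, every $x \in \Rset^N$ and every $\tilde u \in H^1 (B_1)$,
\[
 \int_{B_1} \abs{\tilde u}^2 \le C \int_{B_1} \abs{\nabla \tilde u}^2 + \varepsilon^{-\kappa \gamma} V (x + \varepsilon^\kappa y) \abs{\tilde u (y)}^2 \dif y.
\]

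Next I would apply Lemma~\ref{lemmaLowerBoundVEverywhere} to obtain the pointwise bound $V (z) \ge \min \{\mu, \nu \abs{z - a_j}^\gamma : j = 1, \dotsc, k\}$ and split into two geometric cases depending on a large constant $R_0 > 1$ to be fixed below. In the \emph{far case} $\min_j \abs{x - a_j} \ge R_0 \varepsilon^\kappa$, any $y \in B_1$ gives $\abs{x + \varepsilon^\kappa y - a_j} \ge (R_0 - 1) \varepsilon^\kappa$, so $\varepsilon^{-\kappa \gamma} V (x + \varepsilon^\kappa y) \ge \nu (R_0 - 1)^\gamma$ for $\varepsilon$ small, and the desired inequality is immediate. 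In the \emph{near case} $\abs{x - a_j} < R_0 \varepsilon^\kappa$ for some $j$, I shift the origin to $a_j$ by setting $b = (x - a_j)/\varepsilon^\kappa \in \Bar{B}_{R_0}$ and $\check u (w) = u (a_j + \varepsilon^\kappa w)$ on $B_1 (b) \subset B_{R_0 + 1}$; the lower bound on $V$ then reads $\varepsilon^{-\kappa \gamma} V (a_j + \varepsilon^\kappa w) \ge \nu \abs{w}^\gamma$ for every $w \in B_1 (b)$ and every sufficiently small $\varepsilon$.

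The heart of the argument is the following uniform weighted Poincar\'e inequality: there exists $C_0 > 0$ such that
\[
 \int_{B_1 (b)} \abs{v}^2 \le C_0 \int_{B_1 (b)} \abs{\nabla v}^2 + \abs{w}^\gamma \abs{v}^2 \dif w
\]
for every $b \in \Bar{B}_{R_0}$ and every $v \in H^1 (B_1 (b))$. I would prove it by contradiction and compactness: a failing sequence provides $b_n \to b_*$ and $v_n$ of unit $L^2$ norm for which the right-hand side tends to zero. Translating via $\tilde v_n (y) = v_n (y + b_n)$ onto the fixed ball $B_1$, Rellich's theorem produces a strong $L^2$ limit $\tilde v$ of unit norm; weak lower semicontinuity forces $\int_{B_1} \abs{\nabla \tilde v}^2 = 0$, so $\tilde v$ is constant, while Vitali's convergence theorem, applicable thanks to the uniform bound $\abs{y + b_n}^\gamma \le (1 + R_0)^\gamma$ and the uniform convergence $\abs{y + b_n}^\gamma \to \abs{y + b_*}^\gamma$ on $B_1$, yields $\int_{B_1} \abs{y + b_*}^\gamma \abs{\tilde v}^2 = 0$; since this weight is positive off the null set $\{-b_*\}$, the constant $\tilde v$ must vanish, contradicting $\int_{B_1} \abs{\tilde v}^2 = 1$.

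I expect the main obstacle to be precisely this uniform weighted Poincar\'e estimate, because the weight $\abs{w}^\gamma$ degenerates at the origin and uniformity over all centres $b \in \Bar{B}_{R_0}$ is required, so no standard reference applies directly; the compactness-and-Vitali argument resolves both difficulties at once. Substituting the estimate into the near case and combining with the trivial bound from the far case then delivers the lemma.
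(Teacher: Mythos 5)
Your proof is correct, but it follows a genuinely different route from the paper's. You rescale to the unit ball (using $2-2\kappa=\kappa\gamma$), split according to whether $x$ is within $O(\varepsilon^\kappa)$ of a zero of $V$, and in the near case reduce everything to a uniform weighted Poincar\'e inequality
\[
 \int_{B_1(b)}\abs{v}^2\le C_0\int_{B_1(b)}\abs{\nabla v}^2+\abs{w}^\gamma\abs{v}^2\dif w,
 \qquad b\in\Bar{B}_{R_0},
\]
which you establish by a compactness--contradiction argument; the only input you need from Lemma~\ref{lemmaLowerBoundVEverywhere} is the pointwise lower bound near each zero. The paper instead works directly on the ball $B_{\varepsilon^\kappa}(x)$ without any case distinction: it combines the mean-value (Poincar\'e--Wirtinger) inequality with the observation that, by the global lower bound on $V$, the sublevel set $\{V<c\,\varepsilon^{\kappa\gamma}\}$ has measure at most $\tfrac14\abs{B_{\varepsilon^\kappa}}$, so that the average of $\abs{u}$ can be split by Cauchy--Schwarz into a term controlled by $\varepsilon^{-\kappa\gamma}\int V\abs{u}^2$ and a term that is absorbed into the left-hand side. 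The paper's argument is fully quantitative (explicit constants, no compactness) and uses only the measure of the sublevel sets of $V$; yours is softer (the constant $C_0$ is not explicit) but more structural, in that it isolates the limiting weighted Poincar\'e inequality with weight $\abs{w}^\gamma$ that governs the behaviour near a zero of $V$. One small remark: the appeal to Vitali's theorem in your compactness step can be replaced by the elementary estimate using the uniform bound $\abs{y+b_n}^\gamma\le(1+R_0)^\gamma$ together with the strong $L^1$ convergence of $\abs{\tilde v_n}^2$, but as written the step is sound.
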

\begin{proof}
\resetconstant
Let \(x \in \Rset^N\). By the Minkowski, Poincar\'{e} and Cauchy--Schwarz inequalities (see for example \cite{Evans}), we first see that,
\begin{equation}
\label{ineqMinkPoinc}
\begin{split}
  \bigg(\int_{B_{\varepsilon^\kappa} (x)}\abs{u}^2\bigg)^{\frac{1}{2}}
&\leq\bigg(\int_{B_{\varepsilon^\kappa}(x)}\abs{u - \Bar{u}}^2\bigg)^{\frac{1}{2}}+
\bigg(\int_{B_{\varepsilon^\kappa}(x)}|\Bar{u}|^2\bigg)^{\frac{1}{2}}\\
&\leq  \Cl{poincareitsr} \varepsilon^{\kappa} \bigg( \int_{B_{\varepsilon^\kappa}(x)}\abs{\nabla u}^2\bigg)^{\frac{1}{2}}
+\frac{1}{ |B_{\varepsilon^\kappa}|^{\frac{1}{2}}}\int_{B_{\varepsilon^\kappa}(x)}\abs{u}
\end{split}
\end{equation}
where the constant $\Cr{poincareitsr}$ only depends on the dimension $N$, and \(\Bar{u}\) denotes the average of the function
\(u\) on the ball \(B_{\varepsilon^\kappa} (x)\):
\[
\Bar{u}:=\frac{1}{|B_{\varepsilon^\kappa}(x)|} \int_{B_{\varepsilon^\kappa}(x)} u.
\]
By Lemma~\ref{lemmaLowerBoundVEverywhere}, we observe that, if \(\lambda \le \mu\),
\[
 \vert \{z \in \Rset^N \st V (z) < \lambda \}\vert \le k \abs{B_1} \,\Bigl(\frac{\lambda}{\mu}\Bigr)^\frac{N}{\gamma}.
\]
If we take \(\lambda = \mu ((1/4k)^{1/N}\varepsilon^\kappa)^{\gamma}\), we have, if \(\varepsilon\) is small enough,
\begin{equation}
\label{ineqVeps}
  \vert \{z \in \Rset^N \st V (z) < \Cl{cVeps} \varepsilon^{\kappa\gamma} \}\vert \le \frac{\abs{B_{\varepsilon^\kappa}}}{4}.
\end{equation}
We have thus, by \eqref{ineqVeps} and by the Cauchy--Schwarz inequality
\[
\begin{split}
  \int_{B_{\varepsilon^\kappa}(x)}\abs{u}
  \le \frac{\Cl{eqystr1}}{(\Cr{cVeps} \varepsilon^{\kappa\gamma})^{1/2}} \int_{B_{\varepsilon^\kappa}(x)} V^\frac{1}{2} \abs{u}
  + \int_{B_{\varepsilon^\kappa}(x) \cap V^{-1} ([0, \Cr{cVeps} \varepsilon^{\kappa\gamma}))} \abs{u}\\
  \le \frac{\Cr{eqystr1}\abs{B_{\varepsilon^\kappa}}^\frac{1}{2}}{(\Cr{cVeps} \varepsilon^{\kappa\gamma})^{1/2}}
  \biggl(\int_{B_{\varepsilon^\kappa}(x)} V \abs{u}^2\biggr)^\frac{1}{2}
  + \frac{\abs{B_{\varepsilon^\kappa}}^{1/2}}{2} \biggl(\int_{B_{\varepsilon^\kappa}(x)} \abs{u}^2\biggr)^\frac{1}{2}.
\end{split}
\]
In view of \eqref{ineqMinkPoinc} we obtain finally
\[
\begin{split}
 \bigg(\int_{B_{\varepsilon^\kappa} (x)}&\abs{u}^2\bigg)^{\frac{1}{2}}\\
 &\le  \Cr{poincareitsr}\varepsilon^\kappa \bigg( \int_{B_{\varepsilon^\kappa}(x)}\abs{\nabla u}^2\bigg)^{\frac{1}{2}}
 + \frac{\Cr{eqystr1}}{(\Cr{cVeps} \varepsilon^{\kappa\gamma})^{1/2}}  \biggl(\int_{B_{\varepsilon^\kappa}(x)} V \abs{u}^2\biggr)^\frac{1}{2}
 + \frac{1}{2} \bigg(\int_{B_{\varepsilon^\kappa} (x)}\abs{u}^2\bigg)^{\frac{1}{2}}\\
 &\le \frac{\C}{\varepsilon^{\gamma/(\gamma + 2)}} \bigg( \int_{B_{\varepsilon^\kappa}(x)}\varepsilon^2 \abs{\nabla u}^2 + V \abs{u}^2 \bigg)^{\frac{1}{2}}+ \frac{1}{2} \bigg(\int_{B_{\varepsilon^\kappa} (x)}\abs{u}^2\bigg)^{\frac{1}{2}}.
\end{split}
\]
The conclusion follows.
\end{proof}

Finally, we recall how similarly to Lemma~\ref{lem2.1}, a control in \(H^1_W(\Rset^N)\) on a ball gives a control in \(H^1\) on the same ball.
\begin{lemma}\label{lem4.3}
If \(W \in C (\Rset^N)\) is positive and \(\gamma\)--homogeneous, then there exists a constant \(C > 0\)
such that if $R > 0$ and \(v \in H^1 (B_R)\), then
\begin{equation*}
\int_{B_R} \abs{v}^2 \leq C \int_{B_R} R^2 \abs{\nabla v}^2 + \frac{W \abs{v}^2}{R^\gamma}.
\end{equation*}
\end{lemma}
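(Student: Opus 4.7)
The plan is to prove the case $R = 1$ first by a cutoff argument in the spirit of Lemma~\ref{lem2.1}, and then scale to arbitrary $R$, using crucially the $\gamma$--homogeneity of $W$.

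For the case $R = 1$, the key observation is that since $W$ is continuous and positive $\gamma$--homogeneous, we have
\[
 \mu := \inf_{y \in \partial B_1} W (y) > 0,
\]
so that $W (x) \ge \mu \abs{x}^\gamma \ge \mu 2^{-\gamma}$ for every $x \in B_1 \setminus B_{1/2}$. This immediately gives the estimate
\[
 \int_{B_1 \setminus B_{1/2}} \abs{v}^2 \le \frac{2^\gamma}{\mu} \int_{B_1 \setminus B_{1/2}} W \abs{v}^2.
\]
For the inner ball, I would fix a cutoff $\psi \in C^\infty_c (B_1)$ with $0 \le \psi \le 1$ and $\psi = 1$ on $B_{1/2}$. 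Since $\psi v \in H^1_0 (B_1)$, the classical Poincaré inequality yields
\[
 \int_{B_{1/2}} \abs{v}^2 \le \int_{B_1} \abs{\psi v}^2 \le C \int_{B_1} \abs{\nabla (\psi v)}^2 \le C \int_{B_1} \abs{\nabla v}^2 + C \norm{\nabla \psi}^2_{L^\infty} \int_{B_1 \setminus B_{1/2}} \abs{v}^2,
\]
and the last integral is already controlled by $\int_{B_1} W\abs{v}^2$. Adding the two inequalities proves the case $R = 1$.

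For general $R > 0$, I would introduce the rescaled function $w (y) = v (R y)$ on $B_1$. A direct change of variables shows that
\[
 \int_{B_1} \abs{w}^2 = R^{-N} \int_{B_R} \abs{v}^2, \qquad \int_{B_1} \abs{\nabla w}^2 = R^{2 - N} \int_{B_R} \abs{\nabla v}^2,
\]
while by the $\gamma$--homogeneity $W (R y) = R^\gamma W (y)$, so
\[
 \int_{B_1} W \abs{w}^2 = R^{-N} \int_{B_R} W (x/R) \abs{v}^2 \dif x = R^{-N - \gamma} \int_{B_R} W \abs{v}^2.
\]
Applying the case $R = 1$ to $w$ and multiplying through by $R^N$ yields exactly the stated inequality, with the same constant $C$. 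The main subtlety, such as it is, lies in handling the vanishing of $W$ at the origin, which is why a pure lower bound $V \gtrsim \abs{x}^\gamma$ is not enough on all of $B_1$ and the cutoff trick is needed; once that is in place, homogeneity makes the scaling step essentially automatic.
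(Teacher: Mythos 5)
Your proof is correct and follows essentially the same route as the paper's: reduce to $R=1$ by scaling together with the $\gamma$--homogeneity of $W$, control the inner ball via a cutoff and the Poincar\'e inequality with Dirichlet boundary conditions, and control the annulus $B_1\setminus B_{1/2}$ using the positive lower bound of $W$ there. Your explicit quantification of that lower bound as $\mu\,2^{-\gamma}$ with $\mu=\inf_{\partial B_1}W$ and your spelled-out change of variables are just more detailed versions of steps the paper states tersely.
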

\begin{proof}
\resetconstant
By scaling of the inequality and by \(\gamma\)--homogeneity of the potential \(W\), we can assume without loss of generality that \(R = 1\).
We choose \(\psi \in C^{\infty}_c (B_1)\)  such that \(\psi = 1\) on \(B_{1/2}\).
By the Poincar\'{e} inequality with Dirichlet boundary conditions on the ball \(B_{1}\) and since \(W\) is bounded from below on \(B_1 \setminus B_{1/2}\), we have by Weierstrass' theorem, that
\begin{equation*}
\begin{split}
\int_{B_1} \abs{v}^2  &= \int_{B_1} \psi^2\abs{v}^2+(1-\psi^2)\abs{v}^2
\leq \Cl{cPoincDirichlet} \int_{B_1} |\nabla(\psi v)|^2+\int_{B_1 \setminus B_{1/2}} (1-\psi^2)\abs{v}^2\\
&\leq 2\Cr{cPoincDirichlet}  \int_{B_{1}} \abs{\nabla v}^2+(2\Cr{cPoincDirichlet}\norm{\nabla \psi}_{\infty}^2+1 )\int_{B_{1}\setminus  B_{1/2}} \abs{v}^2
\leq \C \int_{B_1} \abs{\nabla v}^2+W\abs{v}^2.\qedhere
\end{split}
\end{equation*}
\end{proof}

We are now in position to prove Proposition~\ref{propositionLowerBound}.
\begin{proof}[Proof of Proposition~\ref{propositionLowerBound}]
\resetconstant
By taking if necessary a subsequence, we can assume that
\[
 \varliminf_{n \to \infty} \frac{1}{\varepsilon_n^{\kappa (N + \frac{p \gamma - \alpha}{p - 1})}} \mathcal{I}_{\varepsilon_n} (u_n)=
 \varlimsup_{n \to \infty} \frac{1}{\varepsilon_n^{\kappa (N + \frac{p \gamma - \alpha}{p - 1})}} \mathcal{I}_{\varepsilon_n} (u_n) < + \infty.
\]
We also observe that for each \(n \in \Nset\),
\begin{equation}
\label{normIdentity}
 \int_{\Rset^N} \varepsilon_n^2 \abs{\nabla u_n}^2 + V \abs{u_n}^2
 = \frac{2 p}{p - 1} \mathcal{I}_{\varepsilon_n} (u_n).
\end{equation}

By the scaled version of the classical Sobolev embedding theorem, we have, for each $q>1$ with $\frac{1}{q}\in(\frac{1}{2}-\frac{1}{N},\frac{1}{2})$, that for every \(x \in \Rset^N\)
\begin{equation*}
  \bigg(\int_{B_{\varepsilon_n^{\kappa} (x)}} \abs{u_n}^q \bigg)^\frac{2}{q}
    \leq \Cl{cUppBoundSobolevBall} \varepsilon_n^{\kappa N(\frac{2}{q} - 1)} \int_{B_{\varepsilon_n^{\kappa}} (x)} \varepsilon_n^{2\kappa} |\nabla u_n|^2+\abs{u_n}^2
\end{equation*}
here the Sobolev embedding constant $\Cr{cUppBoundSobolevBall}$ is independent of the point $x \in \Rset^N$, which, together with Lemma~\ref{lemglobal}, implies that
\begin{equation*}
 \bigg(\int_{B_{\varepsilon_n^{\kappa} (x)}} \abs{u_n}^q \bigg)^\frac{2}{q}
 \leq \Cl{cUppBoundSobolevBallV}\, \varepsilon_n^{\kappa(\frac{2 N }{q} - \gamma - N)}  \int_{B_{\varepsilon_n^{\kappa}} (x)} \varepsilon_n^2 \abs{\nabla u_n}^2+ V \abs{u_n}^2
\end{equation*}
and then
\begin{equation}
\label{eqglobalineq}
 \int_{B_{\varepsilon_n^{\kappa}} (x)}\abs{u_n}^q
 \leq \Cr{cUppBoundSobolevBallV}\, \varepsilon_n^{\kappa(\frac{2 N }{q} - \gamma - N)} \bigg(\int_{B_{\varepsilon_n^{\kappa}}(x)}\abs{u_n}^q \bigg)^{1- \frac{2}{q}} \int_{B_{\varepsilon_n^{\kappa}} (x)} \varepsilon_n^2 \abs{\nabla u_n}^2+ V \abs{u_n}^2.
\end{equation}
By integration both sides  on \eqref{eqglobalineq} and by Fubini's theorem we conclude that
\begin{equation*}
 \int_{\mathbb{R}^N}\abs{u_n}^q
 \leq \Cr{cUppBoundSobolevBallV}\,\varepsilon_n^{\kappa( \frac{2 N  }{q} - \gamma - N)}
 \bigg(\sup\limits_{x\in\mathbb{R}^N}\int_{B_{\varepsilon_n^{\kappa}}(x)}\abs{u_n}^q \bigg)^{1-\frac{2}{q}} \int_{\mathbb{R}^N} \varepsilon_n^2 \abs{\nabla u_n}^2+ V \abs{u_n}^2,
\end{equation*}
the constant $\Cr{cUppBoundSobolevBallV}$ depends neither on the point $x \in \Rset^N$ nor on the parameter $\varepsilon_n > 0$ provided that
$\varepsilon_n$ is small enough.
Since by assumption for every \(n \in \Nset\) the function \(u_n\) is a solution of the Choquard equation ($\mathcal{C}_{\varepsilon_n}$), we deduce from the Hardy--Littlewood--Sobolev inequality \eqref{HLS} that
\begin{equation*}
\begin{split}
\int_{\mathbb{R}^N}\varepsilon_n^2 & \abs{\nabla u_n}^2+ V \abs{u_n}^{2}\\
&=\int_{\mathbb{R}^N} (I_\alpha \ast \abs{u_n}^p)\abs{u_n}^p
\leq \C   \bigg(\int_{\mathbb{R}^N}\abs{u_n}^{\frac{2Np}{N+\alpha}}\bigg)^{\frac{N+\alpha}{N}}\\
&\leq \C\Biggl( \varepsilon_n^{\kappa(\frac{N + \alpha}{p} - \gamma - N)}
\biggl(\sup\limits_{x\in\mathbb{R}^N}\int_{B_{\varepsilon_n^{\kappa}} (x)}\abs{u_n}^{\frac{2Np}{N+\alpha}}
\biggr)^{1-\frac{N+\alpha}{Np}}\int_{\mathbb{R}^N}\varepsilon_n^2 |\nabla u_n|^2
+ V \abs{u_n}^2\Biggr)^{\frac{N+\alpha}{N}}
\end{split}
\end{equation*}
by the boundedness assumption on the sequence and by \eqref{normIdentity}, we then arrive at
\begin{equation*}
\varliminf_{n\to \infty}
\sup\limits_{x\in\mathbb{R}^N}
\frac{1}{\varepsilon_n^{\kappa (N + \frac{N p}{N + \alpha} \frac{\gamma - \alpha}{p - 1})}}
\int_{B_{\varepsilon_n^{\kappa}} (x)}\abs{u_n}^{\frac{2Np}{N+\alpha}}>0.
\end{equation*}
Hence, there exists a sequence of points $(x_n)_{n\in\mathbb{N}}\) in the space \(\mathbb{R}^N$ such that
\begin{equation}
\label{eqLowerBallsNpNalpha}
\varliminf_{n\to \infty}
\frac{1}{\varepsilon_n^{\kappa (N + \frac{N p}{N + \alpha} \frac{\gamma - \alpha}{p - 1})}}
\int_{B_{\varepsilon_n^\kappa} (x_n)}\abs{u_n}^{\frac{2Np}{N+\alpha}}>0.
\end{equation}
Since \(\frac{N p}{N + \alpha} - 1 >0\), \(1 - \frac{(N - 2)p}{N + \alpha}>0\),
\begin{gather}
\label{eqGagliardoNirenbergCondition1}
\Bigl(\frac{N p}{N + \alpha} - 1\Bigr) + \Bigl(1 - \frac{(N - 2)p}{N + \alpha}\Bigr)  = \frac{2 p}{N + \alpha}
\intertext{and }
\label{eqGagliardoNirenbergCondition2}
\Bigl(\frac{N p}{N + \alpha} - 1\Bigr) \Bigl(1 - \frac{2}{N}\Bigl) + \Bigl(1 - \frac{(N - 2)p}{N + \alpha}\Bigr)  = \frac{2}{N},
\end{gather}
by a scaling of the endpoint Gagliardo--Nirenberg interpolation inequality on
the ball and by Lemma~\ref{lemglobal}, we have
\begin{equation}
\label{eqLowerBalls2}
\begin{split}
&\int_{B_{\varepsilon_n^\kappa} (x_n)} \abs{u_n}^\frac{2Np}{N + \alpha}\\
 &\qquad \le \C \biggl(\int_{B_{\varepsilon_n^\kappa}(x_n)} \abs{\nabla u_n}^2 + \varepsilon_n^{-2 \kappa} \abs{u_n}^2\biggr)^{\frac{N}{2} (\frac{N p}{N + \alpha} - 1)} \biggl(\int_{B_{\varepsilon_n^\kappa}(x_n)} \abs{u_n}^2 \biggr)^{\frac{N}{2}(1 - \frac{(N - 2)p}{N + \alpha})}\\
 &\qquad\le \C \biggl(\frac{1}{\varepsilon_n^2} \int_{B_{\varepsilon_n^\kappa}(x_n)} \varepsilon_n^2 \abs{\nabla u_n}^2 + V \abs{u_n}^2\biggr)^{\frac{N}{2} (\frac{N p}{N + \alpha} - 1)}
 \biggl(\int_{B_{\varepsilon_n^\kappa}(x_n)} \abs{u_n}^2 \biggr)^{\frac{N}{2}(1 - \frac{(N - 2)p}{N + \alpha})}.
\end{split}
\end{equation}
Thus, by \eqref{normIdentity}, by the boundedness assumption on the energy and  by \eqref{eqLowerBallsNpNalpha},
we deduce from \eqref{eqLowerBalls2} that
$$
\varepsilon_{n}^{\kappa(N+\frac{Np}{N+\alpha}\frac{\gamma-\alpha}{p-1})}\leq \C \, \varepsilon_{n}^{\kappa(N-2+\frac{\gamma-\alpha}{p-1})\frac{N}{2}(\frac{Np}{N+\alpha}-1)}
\bigg(\int_{B_{\varepsilon^{\kappa}_n}(x_n)}\abs{u_n}^2\bigg)^{\frac{N}{2}(1-\frac{(N-2)p}{N+\alpha})}
$$
and we then have in view of the identities \eqref{eqGagliardoNirenbergCondition1} and \eqref{eqGagliardoNirenbergCondition2}  that
\begin{equation}
\label{eqLowerBalls2V}
 \varliminf_{n\to \infty}
\frac{1}{\varepsilon_n^{\kappa (N +  \frac{\gamma - \alpha}{p - 1})}}
\int_{B_{\varepsilon_n^\kappa} (x_n)}\abs{u_n}^2 > 0.
\end{equation}

On the other hand we have
\begin{equation}
\label{eqUpperBalls2V}
 \varlimsup_{n \to \infty} \frac{1}{\varepsilon_n^{\kappa (N + \frac{p\gamma - \alpha}{p - 1})}}
 \int_{B_{\varepsilon_n^\kappa}(x_n)} V \abs{u_n}^2
 \le \varlimsup_{n \to \infty} \frac{1}{\varepsilon_n^{\kappa (N + \frac{p\gamma - \alpha}{p - 1})}} \int_{\Rset^N} \varepsilon_n^2 \abs{\nabla u_n}^2 + V \abs{u_n}^2
 <+\infty.
 \end{equation}
By combining \eqref{eqLowerBalls2V} and \eqref{eqUpperBalls2V}, we deduce that
\begin{equation}\label{eqboundedpara1}
 \varlimsup_{n \to \infty} \frac{1}{\varepsilon_n^{\kappa \gamma}} \inf_{B_{\varepsilon_n^\kappa} (x_n)} V
 < + \infty.
\end{equation}
We claim that there exists \(x_* \in \Rset^N\) such that
\(V (x_*) = 0\) and up to a subsequence, the sequence \((x_n)_{n \in \Nset}\) satisfies the condition that
\begin{equation}\label{eqboundedpara2}
 \varlimsup_{n\to\infty}\frac{|x_n-x_*|}{\varepsilon^{\kappa}_n}<+\infty.
\end{equation}
In fact, by \eqref{eqboundedpara1}, there is a sequence $y_n\in B_{\varepsilon_n^{\kappa}}(x_n)$ such that
\[
  \varlimsup_{n\to\infty}\frac{1}{\varepsilon_{n}^{\kappa\gamma}}V(y_n) < +\infty.
\]
By Lemma~\ref{lemmaLowerBoundVEverywhere}, this implies that 
\[
 \varlimsup_{n\to\infty}\frac{\min_{1 \le i \le k} (\abs{y_n  - a_i}^\kappa)}{\varepsilon_{n}^{\kappa\gamma}} < +\infty,
\]
where \(\{a_1, \dotsc, a_k\} = V^{-1}(\{0\})\).
Thus, up to a subsequence, there exists a point \(x_* \in \{a_1, \dotsc, a_k\}\) such that 
\[
 \varlimsup_{n\to\infty}\frac{\abs{y_n - x_*}}{\varepsilon_{n}^{\gamma}} < +\infty, 
\]
and thus 
\[
 \varlimsup_{n\to\infty}\frac{\abs{x_n - x_*}}{\varepsilon_{n}^{\gamma}} 
 \le 1 + \varlimsup_{n\to\infty}\frac{\abs{x_n - x_*}}{\varepsilon_{n}^{\gamma}}< +\infty.
\]
In particular by \eqref{eqLowerBallsNpNalpha}, there exists \(R_* > 0\) such that
\begin{equation}
\label{eqLowerBallsNpNalphaR}
 \varliminf_{n\to \infty}
\frac{1}{\varepsilon_n^{\kappa (N + \frac{N p}{N + \alpha} \frac{\gamma - \alpha}{p - 1})}}
\int_{B_{R_* \varepsilon_n^\kappa} (x_*)}\abs{u_n}^{\frac{2Np}{N+\alpha}}>0.
\end{equation}

We define now for each \(n \in \Nset\) the rescaled function \(v_n : \Rset^N \to \Rset\) for each \(y \in \Rset^N\) by
\[
 v_n (y) = \varepsilon_n^{\kappa \frac{\alpha - \gamma}{2(p - 1)}} u_n (x_* + \varepsilon_n^\kappa y).
\]
Let \(W \in C (\Rset^N)\) be a positive \(\gamma\)--homogeneous function such
that
\[
 \varliminf_{x \to x_*} \frac{V (x) - W (x - x_*)}{\abs{x - x_*}^\gamma} \ge 0.
\]
We observe that since \(W\) is positive, this is equivalent to having
\begin{equation}
\label{condVW}
 \varlimsup_{x \to x_*} \frac{W (x - x_*)}{V (x)} \le 1.
\end{equation}
We now compute for each \(R > 0\) and \(n \in \Nset\),
\[
 \int_{B_R} \abs{\nabla v_n}^2 + W \abs{v_n}^2
 \le \frac{1}{\varepsilon_n^{\kappa (N + \frac{p \gamma - \alpha}{\gamma - \alpha})}}
 \bigg(\sup_{\abs{x - x_*} \le R \varepsilon_n^\kappa} \frac{W (x - x_*)}{V (x)}\bigg)
 \int_{\Rset^N} \varepsilon_n^2 \abs{\nabla u_n}^2 + V \abs{u_n}^2,
\]
and thus in view of \eqref{condVW}, for every \(R > 0\),
\begin{equation}
\label{eqAsymptotRW}
 \varlimsup_{n \to \infty} \int_{B_R} \abs{\nabla v_n}^2 + W \abs{v_n}^2
 \le \lim_{n \to \infty}
 \frac{1}{\varepsilon_n^{\kappa (N + \frac{p \gamma - \alpha}{\gamma - \alpha})}}
 \int_{\Rset^N} \varepsilon_n^2 \abs{\nabla u_n}^2 + V \abs{u_n}^2 < + \infty.
\end{equation}
By Lemma~\ref{lem4.3}, the sequence \((v_n)_{n \in \Nset}\) is bounded in \(H^1 (B_R)\).
By weak compactness and by a diagonal argument, there exists a function \(v_* : \Rset^N \to \Rset\) such that for each \(R > 0\), one has \(v_* \in H^1 (B_R)\) and the sequence \((v_n)_{n \in \Nset}\) converges weakly to \(v_*\) in the space \(H^1 (B_R)\).

By the lower semicontinuity of the norm, by \eqref{eqAsymptotRW} and
by \eqref{normIdentity} and by the boundedness assumption, we have
\begin{equation}\label{globalsobolev}
\begin{split}
\int_{\mathbb{R}^N} \abs{\nabla v_*}^2 + W \abs{v_*}^2
  &=\lim_{R\to\infty} \int_{B_R}\abs{\nabla v_*}^2+W\abs{v_*}^2 \\
  &\leq \lim\limits_{R\to\infty}\varliminf\limits_{n\to \infty} \int_{B_R}\abs{\nabla v_n}^2+W \abs{v_n}^2\\
& \le \lim_{n \to \infty}
 \frac{1}{\varepsilon_n^{\kappa (N + \frac{p \gamma - \alpha}{\gamma - \alpha})}}
 \int_{\Rset^N} \varepsilon_n^2 \abs{\nabla u_n}^2 + V \abs{u_n}^2 < + \infty.
\end{split}
\end{equation}
Moreover, in view of Rellich's compact embedding theorem, \((v_n)_{n \in \Nset}\) converges
strongly to \(v_*\) in \(L^{\frac{2 N p}{N + \alpha}} (B_{R})\) and thus in view of \eqref{eqLowerBallsNpNalphaR},
\[
 \int_{B_{R_*}} \abs{v_*}^\frac{2 N p}{N + \alpha}
 = \lim_{n \to \infty}\int_{B_{R_*}} \abs{v_n}^\frac{2 N p}{N + \alpha}  > 0.
\]
We observe that for each \(n \in \Nset\), the function \(v_n\) satisfies the equation
\begin{equation}
\label{eqRescaled}
 -\Delta v_n + V_n v_n = (I_\alpha \ast \abs{v_n}^p) \abs{v_n}^{p - 2} v_n,
\end{equation}
where the rescaled potential \(V_n\) is defined for each \(y \in \Rset^N\) by
\[
  V_n (y) = \frac{1}{\varepsilon_n^{\kappa \gamma}} V \bigl(x_* + \varepsilon_n^\kappa y\bigr).
\]
In order to pass to the limit in \eqref{eqRescaled}, we consider a test function $\varphi\in C_{c}^{\infty}(\mathbb{R}^N)$. We first have by the weak convergence on balls
\begin{equation}
\label{eqLimitLaplacian}
 \lim_{n \to \infty} \int_{\Rset^N} \nabla v_n \cdot \nabla \varphi
 = \int_{\Rset^N} \nabla v_* \cdot \nabla \varphi.
\end{equation}
If we assume that \(\varphi \ge 0\), since for each \(n \in \Nset\) the function \(v_n\) is nonnegative, we deduce by Fatou's lemma that
\begin{equation}
\label{eqLimitPotential}   \varliminf_{n\to\infty}\int_{\mathbb{R}^N} V_n v_n \varphi \geq \int_{\mathbb{R}^N}W v_* \varphi .
\end{equation}
We finally study the Riesz potential term.
We take $R>0$ large enough such that $\supp\varphi\subset B_R$. Since $v_n\rightharpoonup v_*$ in $H^1(B_R)$, thus
we have, as \(n \to \infty\),
\begin{equation}\label{eq4}
\chi_{B_{2R}}\abs{v_n}^{p-2}v_n \to \chi_{B_{2R}}\abs{v_*}^{p-2}v_* \text{ in } L^q(\mathbb{R}^N) \text{ with } \frac{1}{q}>\big(p-1\big)\Big(\frac{1}{2}-\frac{1}{N}\Big),
\end{equation}
where $\chi_{B_R}$ denotes the characteristic function of the ball $B_R$.
By the Hardy--Littlewood--Sobolev inequality \eqref{HLS}, we know that, as \(n \to \infty\),
\begin{equation}\label{eq5}
I_\alpha \ast (\chi_{B_{2R}}\abs{v_n}^{p}) \to I_\alpha \ast (\chi_{B_{2R}}\abs{v_*}^{p}) \text{ in } L^q(\mathbb{R}^N) \text{ with } \frac{1}{q}>p\Big(\frac{1}{2}-\frac{1}{N}\Big)-\frac{\alpha}{N}
\end{equation}
Moreover, since the sequence \((\abs{v_n}^p)_{n \in \Nset}\) converges weakly to \(\abs{v_*}^p\) in \(L^\frac{2 N}{N + \alpha} (\Rset^N)\) we have \cite{MorozVanSchaftingen2015}*{Proposition 3.4}, as \(n \to \infty\),
\begin{equation}\label{eq6}
I_\alpha \ast \big((1-\chi_{B_{2R}})\abs{v_n}^{p}\big) \to I_\alpha \ast \big((1-\chi_{B_{2R}})\abs{v_*}^{p}\big) \text{ in } L^\infty(B_R)
\end{equation}
Summarizing \eqref{eq5} and \eqref{eq6}, we obtain that, as \(n \to \infty\),
\[
I_\alpha \ast \abs{v_n}^{p} \to I_\alpha \ast \abs{v_*}^{p}  \text{ in } L^q(B_R) \text{ with } \frac{1}{q}>p\Big(\frac{1}{2}-\frac{1}{N}\Big)-\frac{\alpha}{N}
\]
In view of \eqref{eq4}, we get that
\[
(I_\alpha \ast \abs{v_n}^{p})\abs{v_n}^{p-2}v_n \to (I_\alpha \ast \abs{v_*}^{p})\abs{v_*}^{p-2}v_*  \text{ in } L^q(B_R),
\]
as \(n \to \infty\),
with
\[
\frac{1}{q}>p\Big(\frac{1}{2}-\frac{1}{N}\Big)-\frac{\alpha}{N}+(p-1)\Big(\frac{1}{2}-\frac{1}{N}\Big).
\]
Since \(\supp \varphi \subset B_R\), we have
\begin{equation}
\label{eqLimitRiesz}
  \lim_{n \to \infty} \int_{\mathbb{R}^N}(I_\alpha \ast \abs{v_n}^p)\abs{v_n}^{p-2}v_n \varphi = \int_{\mathbb{R}^N}(I_\alpha * \abs{v_*}^p)\abs{v_*}^{p-2}v_* \varphi.
\end{equation}

By the equation \eqref{eqRescaled}, and by the limits \eqref{eqLimitLaplacian}, \eqref{eqLimitPotential} and \eqref{eqLimitRiesz}, we have for every \(\varphi \in C^\infty_c (\Rset^N)\) with \(\varphi\geq 0\) that,
\begin{equation}
\label{eqLimitInequation}
  \int_{\Rset^N}\nabla v_*\cdot \nabla \varphi + W v_* \varphi \le
  \int_{\Rset^N}\bigl(I_\alpha \ast \abs{v_*}^p\bigr) \abs{v_*}^{p - 2} v_*\varphi.
\end{equation}
By \eqref{globalsobolev}, \(v_*\) is an admissible test function and thus
\[
 \int_{\Rset^N} \abs{\nabla v_*}^2 + W\abs{v_*}^2
 \le \int_{\Rset^N} \bigl(I_\alpha \ast \abs{v_*}^p\bigr) \abs{v_*}^p.
\]
There exists thus \(t_* \in (0, 1]\) such that
\[
 t_*^2 \int_{\Rset^N} \abs{\nabla v_*}^2 + W\abs{v_*}^2
 = t_*^{2 p} \int_{\Rset^N} \bigl(I_\alpha \ast \abs{v_*}^p\bigr) \abs{v_*}^p
\]
We have then
\[
\begin{split}
 \mathcal{E} (W)
 &\le \mathcal{J}_W (t_* v_*)
 = \Bigl(\frac{1}{2} - \frac{1}{2 p}\Bigr)t_*^{2p}
 \int_{\Rset^N} \bigl(I_\alpha \ast \abs{v_*}^p\bigr) \abs{v_*}^p\\
 &\le \Bigl(\frac{1}{2} - \frac{1}{2 p}\Bigr) \varliminf_{n \to \infty} \int_{\Rset^N} \bigl(I_\alpha \ast \abs{v_n}^p\bigr) \abs{v_n}^p
 \le \varliminf_{n \to \infty} \frac{1}{\varepsilon_n^{\kappa (N + \frac{p \gamma - \alpha}{p - 1})}}
 \mathcal{I}_{\varepsilon_n} (u_n).
\end{split}
\]

In the case where there exists a positive \(\gamma\)--homogeneous function \(W \in C (\Rset^N)\) such that
\[
 \lim_{x \to x_*} \frac{V (x) - W (x - x_*)}{\abs{x - x_*}^\gamma} = 0,
\]
we observe that equality holds in \eqref{eqLimitPotential} and thus in \eqref{eqLimitInequation}, so that the additional conclusion follows.
\end{proof}
\begin{proof}[Proof of Theorem~\ref{theoremGammaHomogeneous}]
This follows from Propositions~\ref{propositionExistence}, \ref{propositionUpperBound} and \ref{propositionLowerBound}.
In fact, we have
\[
  \mathcal{C} (x_*)
  = \underline{\mathcal{C}}(x_*)
  \leq\varliminf_{n\to\infty}
      \frac{c_{\varepsilon_n}}{\varepsilon_n^{\kappa(N+\frac{p\gamma-\alpha}{p-1})}}
  \leq \varlimsup_{n\to\infty}
        \frac{c_{\varepsilon_n}}
             {\varepsilon_n^{\kappa(N+\frac{p\gamma-\alpha}{p-1})}}
  \leq \inf_{x\in\Rset^N}\Bar{\mathcal{C}} (x)
  \leq  \mathcal{C} (x_*),
\]
where
$$
 c_{\varepsilon_n} = \mathcal{I}_{\varepsilon_n} (u_{\varepsilon_n})
 = \Bigl(\frac{1}{2} - \frac{1}{2 p}\Bigr) \int_{\Rset^N} \varepsilon_n^2 \abs{\nabla u_{\varepsilon_n}}^2 + V \abs{u_{\varepsilon_n}}^2.
$$
We thus deduce that
\begin{gather*}
 \mathcal{C} (x_*)
 = \mathcal{J}_{W_*} (v_*) =  \Bigl(\frac{1}{2} - \frac{1}{2 p}\Bigr) \int_{\Rset^N} \abs{\nabla v_*}^2 + W_* \abs{v_*}^2,
\end{gather*}
which yields the conclusion.
\end{proof}

\section{Asymptotics for a potential vanishing on an open set}

\label{sectionDeadCore}

This last section is devoted to the proof of Theorem~\ref{theoremDeadCore} which covers the case where the potential vanishes on the closure of smooth bounded open set.

\begin{proof}%
[Proof of Theorem~\ref{theoremDeadCore}]%
\resetconstant%
The existence of solutions for every \(\varepsilon \in (0, \varepsilon_0)\) follows immediately from Proposition~\ref{propositionExistence} with \(\varepsilon_0 > 0\).

We define the auxiliary functional \(\mathcal{K}_\varepsilon \in C^1 (H^1_V (\Rset^N))\)
for each \(v \in H^1_V (\Rset^N)\) by
\[
 \mathcal{K}_\varepsilon (v)
 = \frac{1}{2} \int_{\Rset^N} \abs{\nabla v}^2 + \frac{V}{\varepsilon^2} \abs{v}^2
 - \frac{1}{2 p} \int_{\Rset^N} \bigl(I_\alpha \ast \abs{v}^p\bigr) \abs{v}^p,
\]
and we observe that for every \(u \in H^1_V (\Rset^N)\),
\[
 \mathcal{K}_\varepsilon (\varepsilon^{-\frac{1}{p - 1}} u)
 = \varepsilon^{-\frac{2p}{p - 1}}
 \mathcal{I}_\varepsilon (u).
\]
Hence, we define for every \(\varepsilon > 0\), the function
\[
 v_{\varepsilon} = \varepsilon^{-\frac{1}{p - 1}} u_\varepsilon.
\]
We also consider the functional \(\mathcal{K}_* \in C^1 (E)\) defined for each \(v\in E\) by
\[
  \mathcal{K}_* (v)
 = \frac{1}{2} \int_{\Omega} \abs{\nabla v}^2
 - \frac{1}{2 p} \int_{\Omega} \bigl(I_\alpha \ast \abs{v}^p\bigr) \abs{v}^p,
\]
where
\[
  E = \bigl\{u \in H^1_V (\Rset^N) \st u = 0 \text{ in } \Rset^N \setminus \Omega\bigr \}.
\]
We observe that for every \(v \in E\), we have \(\mathcal{K}_* (v) = \mathcal{K}_\varepsilon (v)\), and thus, for every \(\varepsilon > 0\), since \(u_\varepsilon\) is a groundstate,
\[
\begin{split}
 \mathcal{K}_\varepsilon (v_\varepsilon)
 &= \inf\, \bigl\{ \mathcal{K}_\varepsilon (v) \st v \in H^1_V (\Rset^N)\setminus \{0\} \text{ and }
 \langle \mathcal{K}_\varepsilon' (v), v\rangle = 0 \bigr\}\\
 &\le c_* = \inf\, \bigl\{ \mathcal{K}_* (v) \st v \in E \setminus \{0\} \text{ and }
 \langle \mathcal{K}_*' (v), v\rangle = 0 \bigr\}.
\end{split}
\]
We deduce therefrom that for every \(\varepsilon > 0\), we have
\[
 \int_{\Rset^N} \abs{\nabla v_\varepsilon}^2 + \frac{V}{\varepsilon^2} \abs{v_\varepsilon}^2
 \le \frac{2 p}{p - 1} c_*.
\]
On the other hand, by Lemma~\ref{lem2.1}, we have if \(\varepsilon \le \varepsilon_0\),
\begin{equation}
\label{eqDeadCorePoincare}
 \int_{\Rset^N} \abs{\nabla v_\varepsilon}^2 +  \abs{v_\varepsilon}^2
 \le \Cl{cstxxns} \int_{\Rset^N} \abs{\nabla v_\varepsilon}^2 + \frac{V}{\varepsilon_0^2} \abs{v_\varepsilon}^2
 \le \Cr{cstxxns} \int_{\Rset^N} \abs{\nabla v_\varepsilon}^2 + \frac{V}{\varepsilon^2} \abs{v_\varepsilon}^2
\end{equation}
and thus
\[
 \limsup_{\varepsilon \to 0} \int_{\Rset^N} \abs{\nabla v_\varepsilon}^2 + \abs{v_\varepsilon}^2
 < + \infty.
\]
It follows that there exists a sequence \((\varepsilon_n)_{n \in \Nset}\) in \((0, +\infty)\) converging to \(0\) such that the sequence \((v_{\varepsilon_n})_{n \in \Nset}\) converges weakly in \(H^1 (\Rset^N)\) to some function \(v_* \in H^1 (\Rset^N)\).
By Rellich's theorem, this sequence also converges strongly in \(L^{2}_{\mathrm{loc}} (\Rset^N)\).

If the set \(U \subset \Rset^N\) is open and if \(\Bar{\Omega} \subset U\), then, since \(\varliminf_{\abs{x} \to \infty} V (x) > 0\), we have \(\inf_{\Rset^N \setminus U} V > 0\), and thus for every \(\varepsilon_n > 0\),
\[
 \int_{\Rset^N \setminus U} \abs{v_{\varepsilon_n}}^2
 \le \frac{\varepsilon_n^2}{\inf_{\Rset^N \setminus U} V} \int_{\Rset^N}
 \abs{\nabla v_{\varepsilon_n}}^2 + \frac{V}{\varepsilon_n^2} \abs{v_{\varepsilon_n}}^2,
\]
so that
\[
 \lim_{n\to\infty} \int_{\Rset^N \setminus U} \abs{v_{\varepsilon_n}}^2 = 0.
\]
It follows thus that \((v_{\varepsilon_n})_{n \in \Nset}\) converges strongly to \(v_*\) in \(L^2 (\Rset^N)\). By the Gagliardo--Nirenberg--Sobolev interpolation inequality
we have
\begin{multline*}
  \int_{\Rset^N} \abs{v_{\varepsilon_n} - v_*}^\frac{2 N p}{N + \alpha}\\
  \le \C \Bigl(\int_{\Rset^N} \abs{\nabla (v_{\varepsilon_n} - v_*)}^2 + \abs{v_{\varepsilon_n} - v_*}^2 \Bigr)^{\frac{N}{2}(\frac{N p}{N+\alpha} - 1)}
  \Bigl(\int_{\Rset^N} \abs{v_{\varepsilon_n} - v_*}^2\Bigr)^{\frac{N}{2}(1 - \frac{(N-2) p}{N+\alpha})},
\end{multline*}
so that in view of \eqref{eqDeadCorePoincare}, the sequence \((v_{\varepsilon_n})_{n \in \Nset}\) converges also strongly to \(v_*\) in \(L^\frac{2 N p}{N + \alpha} (\Rset^N)\).
Moreover we also have \(v_* = 0\) on \(\Rset^N \setminus \Bar{\Omega}\).

In view of the Hardy--Littlewood--Sobolev inequality \eqref{HLS}, the classical Sobolev inequality and of \eqref{eqDeadCorePoincare}, we have, for each \(n \in \Nset\),
\[
\begin{split}
\int_{\Rset^N} \abs{v_{\varepsilon_n}}^\frac{2 N p}{N + \alpha}
 &\le \C \Bigl( \int_{\Rset^N}\abs{\nabla v_{\varepsilon_n}}^2 + \abs{v_{\varepsilon_n}}^2 \Bigr)^\frac{N p}{N + \alpha}
 \le \Cl{uxrxjn} \Bigl(\int_{\Rset^N}
          \abs{\nabla v_{\varepsilon_n}}^2 + \frac{V}{\varepsilon_n^2} \abs{v_{\varepsilon_n}}^2
          \Bigr)^\frac{N p}{N + \alpha}\\
 &=
 \Cr{uxrxjn} \Bigl(\int_{\Rset^N} \bigl(I_\alpha \ast \abs{v_{\varepsilon_n}}^p\bigr)
        \abs{v_{\varepsilon_n}}^p\Bigr)^\frac{N p}{N + \alpha}
  \le \C  \Bigl(\int_{\Rset^N} \abs{v_{\varepsilon_n}}^\frac{2 N p}{N + \alpha} \Bigr)^p.
\end{split}
\]
Since \(p > 1\), we deduce that
\[
\int_{\Rset^N} \abs{v_*}^\frac{2 N p}{N + \alpha}
=\lim_{n \to \infty} \int_{\Rset^N} \abs{v_{\varepsilon_n}}^\frac{2 N p}{N + \alpha}> 0,
\]
and thus \(v_*\ne 0\).

If we consider now a test function \(w \in E\), we have for every \(n \in \Nset\),
\begin{equation*}
\begin{split}
 0 &= \int_{\Rset^N} \nabla v_{\varepsilon_n} \cdot \nabla w
 + \frac{V}{\varepsilon_n^2} v_{\varepsilon_n} w
 - \int_{\Rset^N} \bigl(I_\alpha \ast \abs{v_{\varepsilon_n}}^p\bigr) \abs{v_{\varepsilon_n}}^{p - 2} v_{\varepsilon_n} w\\
 &= \int_{\Omega}  \nabla v_{\varepsilon_n}\cdot \nabla w
 - \int_{\Omega} \bigl(I_\alpha \ast \abs{v_{\varepsilon_n}}^p\bigr) \abs{v_{\varepsilon_n}}^{p - 2} v_{\varepsilon_n} w.
\end{split}
\end{equation*}
By the weak convergence of the sequence \((v_{\varepsilon_n})_{n \in \Nset}\) in \(H^1 (\Rset^N)\), by its strong convergence in \(L^\frac{2N p}{N + \alpha} (\Rset^N)\) and by Hardy--Littlewood--Sobolev inequality \eqref{HLS}, we deduce that
\[
\begin{split}
\int_{\Omega}  \nabla v_* \cdot \nabla w
 &- \int_{\Omega} \bigl(I_\alpha \ast \abs{v_*}^p\bigr) \abs{v_*}^{p - 2} v_* w\\
&= \lim_{n\to\infty}\bigg(\int_{\Omega}  \nabla v_{\varepsilon_n} \cdot \nabla w
 - \int_{\Omega} \bigl(I_\alpha \ast \abs{v_{\varepsilon_n}}^p\bigr) \abs{v_{\varepsilon_n}}^{p - 2} v_{\varepsilon_n} w\bigg)=0.
\end{split}
\]
In view of the regularity assumptions on the set \(\Omega\) and classical regularity theory, the function \(v_*\) satisfies the announced equation.

We also have
\[
\begin{split}
 \limsup_{n \to \infty} \int_{\Rset^N} \abs{\nabla v_{\varepsilon_n}}^2
 &\le \lim_{n \to \infty} \int_{\Rset^N} \abs{\nabla v_{\varepsilon_n}}^2 + \frac{V}{\varepsilon_n^2} \abs{v_{\varepsilon_n}}^2
 = \lim_{n \to \infty} \int_{\Rset^N} \bigl(I_\alpha \ast \abs{v_{\varepsilon_n}}^p\bigr) \abs{v_{\varepsilon_n}}^p \\
 &= \int_{\Rset^N} \bigl(I_\alpha \ast \abs{v_*}^p\bigr) \abs{v_*}^p
 = \int_{\Rset^N} \abs{\nabla v_*}^2.
\end{split}
\]
This implies that \((v_{\varepsilon_n})_{n \in \Nset}\) converges strongly in \(H^1 (\Rset^N)\) to \(v_*\) and that \(v_*\) is a groundstate of the limiting equation.
\end{proof}

\begin{bibdiv}

\begin{biblist}
\bib{AlvesNobregaYang2016}{article}{
   author={Alves, Claudianor O.},
   author={N{\'o}brega, Al{\^a}nnio B.},
   author={Yang, Minbo},
   title={Multi-bump solutions for Choquard equation with deepening
   potential well},
   journal={Calc. Var. Partial Differential Equations},
   volume={55},
   date={2016},
   number={3},
   pages={art. 48, 28 p.},
   issn={0944-2669},
}

\bib{AYJDE}{article}{
    author = {Alves, Claudianor O},
    author = {Yang, Minbo},
     title = {Existence of semiclassical groundstate solutions for a
      generalized {C}hoquard equation},
   journal = {J. Differential Equations},
    volume = {257},
      year = {2014},
    number = {11},
     pages = {4133--4164},
      issn = {0022-0396}
}

\bib{AYJMP}{article}{
    author = {Alves, Claudianor O},
    author = {Yang, Minbo},
     title = {Multiplicity and concentration of solutions for a quasilinear
              Choquard equation},
   journal = {J. Math. Phys.},
    volume = {55},
      year = {2014},
    number = {6},
     pages = {061502, 21},
      issn = {0022-2488}
}

\bib{ABC}{article}{
    author = {Ambrosetti, A.},
    author =  {Badiale, M.},
    author = {Cingolani, S.},
     title = {Semiclassical states of nonlinear {S}chr\"odinger equations},
   journal = {Arch. Rational Mech. Anal.},
    volume = {140},
      year = {1997},
    number = {3},
     pages = {285--300},
}

\bib{AM2006}{book} {
    author = {Ambrosetti, Antonio},
    author = {Malchiodi, Andrea},
     title = {Perturbation methods and semilinear elliptic problems on
              {$\mathbb{R}^n$}},
    series = {Progress in Mathematics},
    volume = {240},
 publisher = {Birkh\"auser},
 address={Basel},
      year = {2006},
    pages = {xii+183},
     isbn = {978-3-7643-7321-4; 3-7643-7321-0}
}

\bib{AM2007}{article}{
    author = {Ambrosetti, Antonio},
    author= { Malchiodi, Andrea},
     title = {Concentration phenomena for nonlinear {S}chr\"odinger
              equations: recent results and new perspectives},
  book = {
  title={Perspectives in nonlinear partial differential equations},
    series = {Contemp. Math.},
    volume = {446},
     publisher = {Amer. Math. Soc.},
     address={Providence, R.I.},
  },
     pages = {19--30},
      year = {2007},
}

\bib{AmbrosettiMalchiodiRuiz2006}{article}{
   author={Ambrosetti, A.},
   author={Malchiodi, A.},
   author={Ruiz, D.},
   title={Bound states of nonlinear Schr\"odinger equations with potentials
   vanishing at infinity},
   journal={J. Anal. Math.},
   volume={98},
   date={2006},
   pages={317--348},
}

\bib{BonheureVanSchaftingen2008}{article}{
   author={Bonheure, Denis},
   author={Van Schaftingen, Jean},
   title={Bound state solutions for a class of nonlinear Schr\"odinger
   equations},
   journal={Rev. Mat. Iberoam.},
   volume={24},
   date={2008},
   number={1},
   pages={297--351},
   issn={0213-2230},
}		

\bib{ByeonWang2002}{article}{
    author = {Byeon,Jaeyoung},
    author = {Wang, Zhi-Qiang},
     title = {Standing waves with a critical frequency for nonlinear
              {S}chr\"odinger equations},
   journal = {Arch. Ration. Mech. Anal.},
    volume = {165},
      year = {2002},
    number = {4},
     pages = {295--316},
     issn = {0003-9527}
}

\bib{ByeonWang2003}{article}{
    author = {Byeon,Jaeyoung},
    author = {Wang, Zhi-Qiang},
     title = {Standing waves with a critical frequency for nonlinear
              {S}chr\"odinger equations. {II}},
   journal = {Calc. Var. Partial Differential Equations},
    volume = {18},
      year = {2003},
    number = {2},
     pages = {207--219},
}

\bib{DF1997}{article} {
    author = {Del Pino, Manuel},
   author =  {Felmer, Patricio L.},
     title = {Semi-classical states for nonlinear {S}chr\"odinger equations},
   journal = {J. Funct. Anal.},
    volume = {149},
      year = {1997},
    number = {1},
     pages = {245--265}
}

\bib{CingolaniSecchiSquassina2010}{article}{
   author={Cingolani, Silvia},
   author={Secchi, Simone},
   author={Squassina, Marco},
   title={Semi-classical limit for Schr\"odinger equations with magnetic
   field and Hartree-type nonlinearities},
   journal={Proc. Roy. Soc. Edinburgh Sect. A},
   volume={140},
   date={2010},
   number={5},
   pages={973--1009},
   issn={0308-2105},
}

\bib{Diosi1984}{article}{
   title={Gravitation and quantum-mechanical localization of macro-objects},
   author={Di\'osi, L.},
   journal={Phys. Lett. A},
   volume={105},
   number={4--5},
   date={1984},
   pages={199--202}
}	

\bib{Evans}{book} {
    author = {Evans, Lawrence C.},
     title = {Partial differential equations},
    series = {Graduate Studies in Mathematics},
    volume = {19},
   edition = {Second},
 publisher = {American Mathematical Society}, 
 address={Providence, R.I.},
      year = {2010},
     pages = {xxii+749}
}

\bib{FloerWeinstein1986}{article}{
    author = {Floer, Andreas},
    author = {Weinstein, Alan},
     title = {Nonspreading wave packets for the cubic {S}chr\"odinger
              equation with a bounded potential},
   journal = {J. Funct. Anal.},
    volume = {69},
      year = {1986},
    number = {3},
     pages = {397--408},
}

\bib{J1}{article}{
   author={Jones, K. R. W.},
   title={Gravitational self-energy as the litmus of reality},
   journal={Modern Physics Letters A },
   volume={10},
   date={1995},
   number={8},
   pages={657--667}
}

\bib{J2}{article}{
   author={Jones, K. R. W.},
   title={Newtonian Quantum Gravity},
   journal={Australian Journal of Physics},
   volume={48},
   date={1995},
   number={6},
   pages={1055--1082}
}

\bib{L}{article}{
   author={Lieb, Elliott H.},
   title={Existence and uniqueness of the minimizing solution of Choquard's
   nonlinear equation},
   journal={Studies in Appl. Math.},
   volume={57},
   date={1976/77},
   number={2},
   pages={93--105}
}

\bib{LL}{book}{
   author={Lieb, E.},
   author={Loss, M.},
   title={Analysis},
   series={Graduate studies in mathematics, vol 14.},
   publisher={American Mathematical Society},
   address={Providence, R.I.},
   date={1997}
}

\bib{Lions1980}{article}{
   author={Lions, P.-L.},
   title={The Choquard equation and related questions},
   journal={Nonlinear Anal.},
   volume={4},
   date={1980},
   number={6},
   pages={1063--1072},
}

\bib{Lions1984}{article}{
    author = {Lions, P.-L.},
     title = {The concentration-compactness principle in the calculus of
              variations. The locally compact case. I},
   journal = {Ann. Inst. H. Poincar\'{e} Anal. Non Lin\'{e}aire},
    volume = {1},
      year = {1984},
    number = {2},
     pages = {109--145},
      issn = {0294-1449}
}

\bib{MPT}{article}{
   author={Moroz, Irene M.},
   author={Penrose, Roger},
   author={Tod, Paul},
   title={Spherically-symmetric solutions of the Schr\"odinger-Newton
   equations},
   journal={Classical Quantum Gravity},
   volume={15},
   date={1998},
   number={9},
   pages={2733--2742},
}

\bib{MorozVanSchaftingen2010}{article}{
   author={Moroz, Vitaly},
   author={Van Schaftingen, Jean},
   title={Semiclassical stationary states for nonlinear Schr\"odinger
   equations with fast decaying potentials},
   journal={Calc. Var. Partial Differential Equations},
   volume={37},
   date={2010},
   number={1--2},
   pages={1--27},
   issn={0944-2669},
}

\bib{MVJFA}{article}{
   author={Moroz, Vitaly},
   author={Van Schaftingen, Jean},
   title={Groundstates of nonlinear Choquard equations: existence, qualitative
properties and decay asymptotics},
   journal={J. Funct. Anal.},
   volume={265},
   date={2013},
   pages={153--184}
}

\bib{MorozVanSchaftingen2015}{article}{
   author={Moroz, Vitaly},
   author={Van Schaftingen, Jean},
   title={Semi-classical states for the Choquard equation},
   journal={Calc. Var. Partial Differential Equations},
   volume={52},
   date={2015},
   number={1--2},
   pages={199--235},
   issn={0944-2669}
}

\bib{MVSReview}{article}{
   author={Moroz, Vitaly},
   author={Van Schaftingen, Jean},
   title={A guide to the Choquard equation},
   journal={J. Fixed Point Theory Appl.},
   doi={10.1007/s11784-016-0373-1},
   date={2016},
}

\bib{Lu2015}{article}{
   author={L{\"u}, Dengfeng},
   title={Existence and concentration of solutions for a nonlinear Choquard
   equation},
   journal={Mediterr. J. Math.},
   volume={12},
   date={2015},
   number={3},
   pages={839--850},
   issn={1660-5446},
}

\bib{OH}{article} {
    author = {Oh, Yong-Geun},
     title = {Existence of semiclassical bound states of nonlinear
              {S}chr\"odinger equations with potentials of the class
              {$(V)_a$}},
   journal = {Comm. Partial Differential Equations},
    volume = {13},
      year = {1988},
    number = {12},
     pages = {1499--1519},
      issn = {0360-5302}
}

\bib{P}{book}{
   author={Pekar, S.},
   title={Untersuchungen \"{u}ber die Elektronentheorie der Kristalle},
   publisher={Akademie-Verlag, Berlin},
   date={1954}
}

\bib{Penrose1996}{article}{
   author={Penrose, Roger},
   title={On gravity's role in quantum state reduction},
   journal={Gen. Relativity Gravitation},
   volume={28},
   date={1996},
   number={5},
   pages={581--600},
}

\bib{Rabinowitz1992}{article}{
    author = {Rabinowitz, Paul H.},
     title = {On a class of nonlinear Schr\"{o}dinger equations},
   journal = {Z. Angew. Math. Phys.},
    volume = {43},
      year = {1992},
    number = {2},
     pages = {270--291},
}

\bib{Secchi2010}{article}{
   author={Secchi, Simone},
   title={A note on Schr\"odinger-Newton systems with decaying electric
   potential},
   journal={Nonlinear Anal.},
   volume={72},
   date={2010},
   number={9--10},
}

\bib{SW2}{article}{
   author={Szulkin, Andrzej},
   author={Weth, Tobias},
   title={The method of Nehari manifold},
   conference={
      title={Handbook of nonconvex analysis and applications},
   },
   book={
      publisher={Int. Press},
      address={Somerville, Mass.},
   },
   date={2010},
   pages={597--632},
}

\bib{VanSchaftingenXia}{article}{
  author={Van Schaftingen,Jean},
  author={Xia, Jiankang},
  title ={Choquard equations under confining external potentials},
  journal={NoDEA Nonlinear Differential Equations Appl.},
  doi={10.1007/s00030-016-0424-8},
  date={2016},
}

\bib{wang}{article}{
    author = {Wang, Xuefeng},
   title = {On concentration of positive bound states of nonlinear
              {S}chr\"odinger equations},
   journal = {Comm. Math. Phys.},
    volume = {153},
      year = {1993},
    number = {2},
     pages = {229--244}
}

\bib{WeiWinter2009}{article}{
   author={Wei, Juncheng},
   author={Winter, Matthias},
   title={Strongly interacting bumps for the Schr\"odinger-Newton equations},
   journal={J. Math. Phys.},
   volume={50},
   date={2009},
   number={1},
   pages={012905, 22},
}

\bib{W}{book}{
   author={Willem, Michel},
   title={Minimax theorems},
   series={Progress in Nonlinear Differential Equations and their
   Applications, 24},
   publisher={Birkh\"auser},
   address={Boston, Mass.},
   date={1996},
   pages={x+162},
   isbn={0-8176-3913-6}
}

\bib{YZZ}{article}{
   author={Yang, Minbo},
   author={Zhang,Jianjun},
   author={Zhang,Yimin},
   title={Multi-peak solutions for nonlinear Choquard equation with a general nonlinearity},
 eprint={arXiv:1604.04715},
   date={2016},
}
\end{biblist}
\end{bibdiv}
\end{document}